
\documentclass[12pt,leqno]{amsart}%
\usepackage{amsmath}
\usepackage[margin=1.5in]{geometry}
\usepackage{graphicx}
\usepackage{amsfonts}
\usepackage{amssymb}
\usepackage[utf8]{inputenc}
\usepackage{comment}
\usepackage{hyperref}%
\setcounter{MaxMatrixCols}{30}
\providecommand{\U}[1]{\protect\rule{.1in}{.1in}}
\newtheorem{theorem}{Theorem}[section]
\theoremstyle{plain}

\newtheorem{corollary}{Corollary}[section]

\newtheorem{lemma}{Lemma}[section]

\newtheorem{proposition}{Proposition}[section]

\numberwithin{equation}{section}
\theoremstyle{definition}
\newtheorem{definition}{Definition}
\theoremstyle{remark}
\newtheorem{remark}{Remark}[section]

\title[Critical points of convex functionals]{ Regularity for critical points of convex functionals on Hessian spaces}
\author{Arunima Bhattacharya }
\address{Department of Mathematics\\
University of Washington, Seattle, WA 98195, U.S.A.}
\email{arunimab@uw.edu}

\begin{document}

\maketitle

\begin{abstract}
We consider variational integrals of the form $\int F(D^2u)$ where $F$ is convex and smooth on the Hessian space. We show that a critical point $u\in W^{2,\infty}$ of such a functional under compactly supported variations is smooth if the Hessian of $u$ has a small oscillation.
\end{abstract}

\section{ Introduction}

In this paper, we prove full regularity for critical points of variational integrals of the form \begin{equation}
\int_{B_1}F(D^{2}u)dx\label{Ffunc}%
\end{equation} where $B_1\subset\mathbb R^n$, by developing regularity theory for weak solutions of fourth order nonlinear elliptic equations in double divergence form.

 Variational problems for the volume functional naturally give rise to fourth order nonlinear equations in double divergence form and provide a significant geometric motivation behind the study of their regularity. For example, the Hamiltonian stationary equation:
\begin{align}
	\int_{B_1}\sqrt{\det g}g^{ij}\delta^{kl}u_{ik}\eta_{jl}dx=0
	\text{     }\forall\eta   \in C_{0}^{\infty}(B_{1}) \label{hstat}
	\end{align}
	where $g=I+\left(  D^{2}u\right)  ^{2}$ is the induced metric from the Euclidean metric in $\mathbb R^{2n}$,
 governs Lagrangian surfaces that minimize the area functional \begin{equation*}
\int_{B_1}\sqrt{\det(I+\left(  D^{2}u\right)  ^{2})}dx \label{HS}%
\end{equation*}
among potential functions $u$ (\cite{Oh}, \cite[Proposition 2.2]{SW03}). For variational integrals of the form (\ref{Ffunc}), the critical point under compactly supported variations satisfies an Euler-Lagrange equation, which shares a similar fourth order structure. In \cite{BW1}, we showed that if $F$ is a smooth convex function of the Hessian and can be expressed as a function of the square of the Hessian, then a $C^{2,\alpha}$ critical point (under compactly supported variations) of (\ref{Ffunc}) will be smooth. We achieved this by establishing regularity for a class of fourth order equations in the following double divergence form\begin{equation}
\int_{B_1}a^{ij,kl}(D^{2}u)u_{ij}\eta_{kl}dx=0,\text{ }\forall\eta\in
C_{0}^{\infty}(B_1) \label{eq1}.
\end{equation}
Recently in \cite{BCW}, we studied regularity for a certain class of fourth order equations in double divergence form that in turn lead to proving smoothness for any $C^{1}$-regular Hamiltonian stationary
Lagrangian submanifold in a symplectic manifold. There are well-known equations that share the same structure: bi-harmonic functions, extremal K\"ahler metrics, to name a few. For second order, PDE theory for equations in divergence form plays a significant role in geometric analysis and is well developed by now. In comparison, for fourth order, theory of double divergence form equations is largely unexplored but remains an important developing area of geometric analysis.

Our results in this paper show that a critical point $u\in W^{2,\infty}(B_1)$ of the variational integral ($\ref{Ffunc}$) will be smooth if $F$ is a uniformly convex, smooth function of the Hessian and the Hessian has a small oscillation.

Before presenting our main result, we first introduce the following notations and definition. 

 \noindent
 \textbf{Notations.} 
 Through out this paper $U$ denotes a convex neighborhood in $S^{n\times n}$, $V$ denotes a convex neighborhood in $\mathbb R^{n}$, and $B_r$ denotes a ball of radius $r$ centered at the origin in $\mathbb R^n$ unless specified otherwise.

\begin{definition}[Small oscillation]\label{bm}
    We say that $f\in L^{\infty}(B_1)$ has a small oscillation in $B_1$ with modulus $\omega_f$ if there exists a $\omega_f>0$ small such that 
    \begin{align}
       ||f-(f)_1||_{L^{\infty}(B_1)}\leq \omega_f \label{BMO1}
        \end{align}
         where $(f)_1$ denotes the average of $f$ over the ball $B_1$.
        \end{definition}

Our main result is the following.

\begin{theorem}\label{main1}[Fourth order]
Suppose that $u\in W^{2,\infty}(B_{1})$ is a critical point of
(\ref{Ffunc}) where
	$F$ is smooth and uniformly convex on $U$ and $D^2u(B_1)\subset U.$ If $D^2u$ has a small oscillation in $B_1$, then $u$ is smooth in $B_1$.
\end{theorem}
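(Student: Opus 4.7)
The plan starts from the Euler-Lagrange equation of (\ref{Ffunc}), namely
$$\int_{B_1} F_{ij}(D^2u)\eta_{ij}\,dx = 0 \qquad \forall\,\eta\in C_0^\infty(B_1),$$
a fourth-order equation in double divergence form whose linearization is Legendre-Hadamard elliptic because uniform convexity forces the tensor $F_{ij,kl}(w)$ to be positive definite on symmetric matrices. Setting $w_0:=(D^2u)_1$ and $v:=u-\tfrac12\langle w_0 x,x\rangle$, one has $\|D^2v\|_{L^\infty(B_1)}\le\omega_{D^2u}$, and by the fundamental theorem of calculus the equation becomes
$$\int_{B_1} b^{ij,kl}(x)\,v_{kl}\,\eta_{ij}\,dx = 0,\qquad b^{ij,kl}(x):=\int_0^1 F_{ij,kl}(w_0+tD^2v(x))\,dt,$$
where the constant $F_{ij}(w_0)$ drops out upon testing against $\eta_{ij}$ and $b^{ij,kl}=F_{ij,kl}(w_0)+O(\omega_{D^2u})$ uniformly in $x$.

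\textbf{Campanato iteration to $C^{2,\alpha}$.} The main step is a perturbative Campanato iteration. On each ball $B_r\subset B_{1/2}$, I would compare $v$ with the solution $\tilde v$ of the frozen constant-coefficient problem $\int F_{ij,kl}(w_0)\tilde v_{kl}\eta_{ij}\,dx=0$ having the same $H^2$-Cauchy data as $v$ on $\partial B_r$. Interior regularity for this frozen operator (smooth coefficients, Legendre-Hadamard elliptic) yields the standard mean-oscillation decay
$$\int_{B_\rho}|D^2\tilde v-(D^2\tilde v)_{B_\rho}|^2\,dx \le C\bigl(\tfrac{\rho}{r}\bigr)^{n+2}\int_{B_r}|D^2\tilde v-(D^2\tilde v)_{B_r}|^2\,dx,\qquad \rho\le r,$$
while testing the equation for $w:=v-\tilde v\in H^2_0(B_r)$ against itself produces a Caccioppoli-type bound $\|D^2w\|_{L^2(B_r)}\le C\,\omega_{D^2u}\,\|D^2v\|_{L^2(B_r)}$. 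For $\omega_{D^2u}$ small enough, a standard dyadic iteration combining these two estimates closes, yielding geometric decay of the mean oscillation $r^{-n}\int_{B_r}|D^2u-(D^2u)_{B_r}|^2$ and hence $D^2u\in C^{0,\alpha}(B_{1/2})$ for some $\alpha\in(0,1)$.

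\textbf{Bootstrap and main obstacle.} Once $D^2u\in C^{0,\alpha}$, the coefficients $F_{ij,kl}(D^2u)$ are H\"older continuous, and passing to the (weak) derivative $u_\gamma$ yields a linear fourth-order double divergence equation with H\"older coefficients. Schauder-type theory for such equations, of the kind developed in \cite{BW1,BCW}, then upgrades $u_\gamma$ to $C^{2,\alpha}$, i.e., $u\in C^{3,\alpha}$, and a standard induction gives $u\in C^\infty$. The hard part will be the Campanato iteration itself: since $u$ is only $W^{2,\infty}$ the equation cannot be differentiated classically, so the perturbation-comparison must be carried out directly in the weak double-divergence formulation, and the smallness of $\omega_{D^2u}$ is essential to absorb the perturbation error via the Caccioppoli inequality into the leading constant-coefficient operator.
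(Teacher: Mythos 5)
Your overall architecture (Euler--Lagrange equation, freezing the coefficients, a comparison/iteration argument, then Schauder bootstrap as in \cite{BW1}) matches the paper's, but the central Campanato iteration as you have set it up does not close, and this is a genuine gap rather than a detail. You freeze the coefficient at the single matrix $w_0=(D^2u)_1$ and, on every ball $B_r$, compare $v=u-\tfrac12\langle w_0x,x\rangle$ with the solution $\tilde v$ of the $F_{ij,kl}(w_0)$-problem. Testing against $v-\tilde v$ gives $\|D^2(v-\tilde v)\|_{L^2(B_r)}\le C\omega\,\|D^2v\|_{L^2(B_r)}$, and the right-hand side is \emph{not} controlled by the quantity you are iterating (the mean oscillation of $D^2v$ on $B_r$); the only available bound is $\|D^2v\|_{L^2(B_r)}^2\le C\omega^2 r^n$. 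The resulting recursion is $\phi(\rho)\le C(\rho/r)^{n+2}\phi(r)+C\omega^4 r^n$ with $\phi(r)=\int_{B_r}|D^2v-(D^2v)_r|^2$, and the additive $r^n$ term caps the decay at $\phi(r)\lesssim r^n$, i.e.\ $D^2u\in\mathrm{BMO}$, never $\phi(r)\lesssim r^{n+2\alpha}$. This is exactly the classical obstruction that $L^\infty$-small-oscillation coefficients yield BMO/$L^p$ gradients but not Schauder estimates, so your claim that the iteration ``yields geometric decay of the mean oscillation'' does not follow.

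There are two standard repairs. One is to re-center at every scale: on each $B_r(x_0)$ subtract the quadratic with Hessian $(D^2u)_{B_r(x_0)}$ and freeze the coefficient there, so that the comparison error becomes $C\omega^2\int_{B_r}|D^2u-(D^2u)_{B_r}|^2$, i.e.\ multiplicative in $\phi(r)$, and the iteration lemma then gives $\phi(r)\le Cr^{n+2\alpha}$ for $\omega$ small. The other is the paper's route: first prove $u\in W^{3,2}_{loc}$ by a difference-quotient Caccioppoli argument (this step is needed to get bounds uniform in $h$), then note that $f=u^{h_m}$ solves a \emph{linear} double-divergence equation $\int b^{ij,kl}f_{ij}\eta_{kl}=0$ whose coefficients have oscillation $O(\omega)$, and iterate the Dirichlet energy $\int_{B_r}|D^2f|^2$ itself. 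There the error term $C\omega^2\int_{B_r}|D^2f|^2$ is already multiplicative in the iterated quantity, the iteration lemma yields $\int_{B_r}|D^2f|^2\le Cr^{n-2+2\alpha}$, and Morrey's Dirichlet growth theorem applied to $Df$ gives $D^2u\in C^{\alpha}$ uniformly in $h$. Note also that uniform convexity gives the full Legendre condition (positivity on all of $S^{n\times n}$), which is what the energy/absorption steps actually use, not merely Legendre--Hadamard. Your bootstrap paragraph is consistent with the paper's use of \cite[Theorem 1.2]{BW1} once $C^{2,\alpha}$ is in hand.
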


 Our proof of Theorem \ref{main1} goes as follows: we start by deriving the Euler Lagrange equation from compactly supported variations. We find the critical point $u$ to be a weak solution of a fourth order nonlinear equation in double divergence form. By a weak solution we mean each of the partial derivatives are taken in a
distributional sense. We develop regularity theory for such a class of fourth order equations by proving $u\in C^{2,\alpha}$, which is sufficient to achieve smoothness.  H\"older continuity of the Hessian leads to H\"older continuous coefficients, which in turn lead to a self improving solution. Finally using the convexity property of $F$, we apply the regularity theory developed to our critical point $u$ to achieve smoothness. 

We elucidate the technical difficulties of the above process by illustrating an analogous result for critical points of simpler variational integrals of the form 
\begin{equation}
    \int_{B_1}F(Du)=0, \label{2nd}
\end{equation}
without relying on the well-known De Giorgi-Nash theory.
\begin{theorem}\label{main2}[Second order]
Suppose that $u\in W^{1,\infty}(B_{1})$ is a critical point of
(\ref{2nd}) where
	$F$ is smooth and uniformly convex on $V$ and $Du(B_1)\subset V.$ If $Du$ has small a oscillation in $B_1$, then $u$ is smooth in $B_1$.
\end{theorem}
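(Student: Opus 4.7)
The plan follows the roadmap described after Theorem~\ref{main1}, adapted to the simpler second-order setting. Testing (\ref{2nd}) with compactly supported variations $u+\varepsilon\eta$ and differentiating at $\varepsilon=0$ yields the Euler-Lagrange equation
\[
\int_{B_1} F_i(Du)\,\eta_i\,dx = 0 \qquad \forall\,\eta\in C_0^\infty(B_1),
\]
so $u$ is a weak solution of the divergence-form equation $\partial_i F_i(Du)=0$. The goal is to upgrade $u$ to $C^{1,\alpha}$; once this is in hand, $F_{ij}(Du)$ is $C^{0,\alpha}$, classical Schauder theory applied to the equivalent equation $F_{ij}(Du)\,u_{ij}=0$ produces $u\in C^{2,\alpha}$, and successively differentiating this equation and reapplying Schauder estimates bootstraps to $u\in C^\infty(B_1)$.

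To gain $C^{1,\alpha}$, I would localize: fix a ball $B_r(x_0)\subset B_1$, set $p_r:=(Du)_{B_r(x_0)}$, and let $v_r(x):=u(x)-p_r\cdot x$, so that $(Dv_r)_{B_r(x_0)}=0$ and $||Dv_r||_{L^\infty(B_r(x_0))}\le 2\omega_f$ (the oscillation of $Du$ on any subball is at most twice its oscillation on $B_1$). Writing $F_i(Du)=F_i(p_r)+\int_0^1 F_{ij}(p_r+tDv_r)\,dt\cdot(v_r)_j$ and using that $F_i(p_r)$ is constant, the Euler-Lagrange equation becomes, on $B_r(x_0)$,
\[
\int A^{ij}(x)\,(v_r)_j\,\eta_i\,dx = 0, \qquad A^{ij}(x):=\int_0^1 F_{ij}\bigl(p_r+t\,Dv_r(x)\bigr)\,dt.
\]
Uniform convexity of $F$ makes $A^{ij}$ uniformly elliptic, while smoothness of $F$ combined with $|Dv_r|\le 2\omega_f$ gives $||A^{ij}-\bar A^{ij}_r||_{L^\infty(B_r(x_0))}\le C\omega_f$, where $\bar A^{ij}_r:=F_{ij}(p_r)$ is a positive-definite constant matrix.

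The heart of the proof is a Campanato iteration comparing $v_r$ with the constant-coefficient solution. Let $h$ solve $\partial_i(\bar A^{ij}_r h_j)=0$ in $B_r(x_0)$ with $h=v_r$ on $\partial B_r(x_0)$. Since $\bar A_r$ is constant, each component of $Dh$ satisfies the same constant-coefficient equation, giving the standard decay
\[
\int_{B_\rho(x_0)}|Dh-(Dh)_\rho|^2\,dx \le C\left(\frac{\rho}{r}\right)^{n+2}\int_{B_r(x_0)}|Dh-(Dh)_r|^2\,dx,
\]
while the energy principle bounds $\int|Dh|^2\le C\int|Dv_r|^2$. The remainder $w:=v_r-h$ lies in $H_0^1(B_r(x_0))$ and satisfies $\partial_i(\bar A^{ij}_r w_j)=\partial_i((\bar A^{ij}_r-A^{ij})(v_r)_j)$; testing with $w$ yields the perturbation estimate $\int|Dw|^2\le C\omega_f^2\int|Dv_r|^2$. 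Setting $\phi(\rho):=\int_{B_\rho(x_0)}|Du-(Du)_\rho|^2$ and using that $Du-(Du)_\rho=Dv_r-(Dv_r)_\rho$, the two bounds combine to
\[
\phi(\rho) \le C_0\bigl[(\rho/r)^{n+2}+\omega_f^2\bigr]\,\phi(r), \qquad 0<\rho\le r.
\]
Choosing $\tau=\rho/r$ small so that $C_0\tau^{n+2}\le \tfrac12\tau^{n+2\alpha}$ for some $\alpha\in(0,1)$, and then $\omega_f$ smaller (depending on $\tau$, $n$, $F$) so that $C_0\omega_f^2\le \tfrac12\tau^{n+2\alpha}$, iteration gives $\phi(\rho)\le C\rho^{n+2\alpha}$; Campanato's characterization of Hölder continuity then yields $Du\in C^{0,\alpha}(B_1)$, feeding into the Schauder bootstrap above.

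The main obstacle I anticipate is arranging the perturbation term in the Campanato iteration so that it is $\omega_f^2\,\phi(r)$ rather than $\omega_f^2$ times a non-decaying quantity. This is precisely why the coefficients must be re-linearized at $p_r=(Du)_{B_r(x_0)}$ on each ball (so that $\int|Dv_r|^2$ is comparable to $\phi(r)$) rather than at a fixed base point; with this choice, the iteration closes at every scale, and the remaining ingredients (the energy estimate for $w$, the constant-coefficient decay for $h$, and the final Schauder bootstrap) are all standard.
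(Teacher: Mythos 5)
Your proposal is correct, but it reaches the key $C^{1,\alpha}$ step by a genuinely different route than the paper. The paper first differentiates the Euler--Lagrange equation by a difference quotient, so it must first establish $u\in W^{2,2}_{loc}$ via a Caccioppoli estimate (Proposition \ref{prop_22}); it then freezes the linearized coefficient $\beta^{ij}=\int_0^1 F_{ij}(Du(x)+t[Du(x+h_m)-Du(x)])\,dt$ at its \emph{average over $B_1$}, runs a Morrey-type iteration on $\phi(\rho)=\int_{B_\rho}|Df|^2$ with exponent $n-2+2\alpha$ (Lemma \ref{HanLin}), and concludes $u^{h_m}\in C^\alpha$ uniformly in $h$ by Morrey's lemma. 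You instead stay at the level of the nonlinear equation: subtracting the affine function $p_r\cdot x$ with $p_r=(Du)_{B_r(x_0)}$ and re-linearizing at $p_r$ \emph{on each ball} turns the equation into $\int A^{ij}(v_r)_j\eta_i=0$ with $\|A-\bar A_r\|_\infty\le C\omega_f$, and a Campanato iteration on $\int_{B_\rho}|Du-(Du)_\rho|^2$ with exponent $n+2\alpha$ closes because $\int_{B_r}|Dv_r|^2=\phi(r)$ exactly. This buys you two things: you never need the preliminary $W^{2,2}$ estimate for the $C^{1,\alpha}$ step, and the smallness condition on $\omega_f$ enters in the same transparent way (your re-centering at $(Du)_{B_r}$ plays the role of the paper's small-oscillation hypothesis on the frozen coefficients). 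The one place you are imprecise is the final bootstrap: passing to the ``equivalent'' non-divergence equation $F_{ij}(Du)u_{ij}=0$ presupposes $u\in W^{2,2}_{loc}$, which you have not yet established at that point. The clean fix is the paper's: once $Du\in C^{0,\alpha}$, the difference quotient $u^{h_m}$ satisfies the divergence-form equation (\ref{lala2}) with $C^{0,\alpha}$ coefficients uniformly in $h$, so divergence-form Schauder gives $u\in C^{2,\alpha}$, and the standard quasilinear bootstrap then yields smoothness. This is a routine repair, not a flaw in the main argument.
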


We prove the above result by studying second order nonlinear equations in divergence form \begin{equation}
\int_{B_1}a^{ij}(Du)u_{i}\eta_{j}dx=0,\text{ }\forall\eta\in
C_{0}^{\infty}(B_1) \label{eq2}.
\end{equation}
A well known example of such an equation is the minimal surface equation:
\begin{equation}\int_{B_1}\frac{\delta_{ij}}{\sqrt{1+|Du|^2}}u_i\eta_jdx=0\label{ms}
\end{equation}
which governs critical points of the area functional
\[\int_{B_1} \sqrt{1+|Du|^2}
\] on graphs $(x,u(x)).$ Following our strategy to prove $C^{2,\alpha}$ regularity for the fourth order equation (\ref{eq1}), we derive $C^{1,\alpha}$ estimates for the second order equation (\ref{eq2}), which again is sufficient to prove smoothness.

The organization of the paper is as follows: in section 2, we develop regularity theory for weak solutions of (\ref{eq1}) by first proving a $C^{2,\alpha}$ estimate for the solution. In section 3, we establish the second order analogue of the same. Finally, in section 4, we prove our main results.

\section{Regularity theory: fourth order}

In this section, we develop regularity theory for weak solutions of (\ref{eq1}). 
Denoting $h_{m}=he_m$, we start by introducing the following definition.
\begin{definition}[Regular equation]
	We define equation (\ref{eq1}) to be regular on $U$ when the 
	following conditions are satisfied on $U$:
	\begin{enumerate}
	    \item[(i)] The coefficients $a^{ij,kl}$ depend smoothly on $D^{2}u$.
	    \item[(ii)] The linearization of (\ref{eq1}) is uniformly elliptic: the leading coefficient of the linearized equation given by 
	    \begin{equation}
	        b^{ij,kl}(D^2u(x))=\int_{0}^1\frac{\partial}{\partial{u_{ij}}} \bigg[a^{pq,kl}(D^2u(x)+t [D^2u(x+h_m)-D^2u(x)])u_{pq}(x)\bigg]dt \label{Bdef}
	    \end{equation}
	     satisfies the standard Legendre ellipticity condition for any $\xi\in U$:
	    \begin{equation}
b^{ij,kl}(\xi)\sigma_{ij}\sigma_{kl}\geq\Lambda\left\Vert \sigma\right\Vert \text{  }\forall \sigma \in S^{n\times n}. 
\label{Bcondition} 
	\end{equation}

	\end{enumerate}
	\end{definition}
\begin{remark}
Observe that (\ref{eq1}) is indeed regular in the sense of \cite[definition 1.1]{BW1} since for a uniformly continuous Hessian, the coefficient given by (\ref{Bdef}) takes the form of the $b^{ij,kl}$ coefficient shown in \cite[(1.3)]{BW1} as $h\rightarrow 0$:
\begin{equation*}
b^{ij,kl}(D^2u(x))=a^{ij,kl}(D^2u(x))+\frac{\partial a^{pq,kl}}{\partial
	u_{ij}}(D^2u(x))u_{pq}(x) .
\end{equation*}

\end{remark}

\subsection{Preliminaries}
Next, we state the following preliminary results, which will be used to prove higher regularity.

\begin{theorem}
\label{five}\cite[Theorem 2.1]{BW1}. Suppose $w\in H^{2}(B_{r})$ satisfies the
uniformly elliptic constant coefficient equation%
\begin{align*}
\int c_{0}^{ik,jl}w_{ik}\eta_{jl}dx &  =0\label{ccoef}\\
\forall\eta &  \in C_{0}^{\infty}(B_{r}(0)).\nonumber
\end{align*}
%
Then for any $0<\rho\leq r$ there holds

\begin{align*}
\int_{B_{\rho}}|D^{2}w|^{2} &  \leq C_{1}(\rho/r)^{n}||D^{2}w||_{L^{2}(B_{r}%
)}^{2}\\
\int_{B_{\rho}}|D^{2}w-(D^{2}w)_{\rho}|^{2} &  \leq C_{2}(\rho/r)^{n+2}%
\int_{B_{r}}|D^{2}w-(D^{2}w)_{r}|^{2}%
\end{align*}
where $C_{1},C_{2}$ depend on the ellipticity constant and $(D^{2}w)_{\rho}$ is the
average value\ of $D^{2}w$ on a ball of radius $\rho$.
\end{theorem}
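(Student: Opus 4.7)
The plan is to deduce both Campanato-type decay estimates from interior Cauchy bounds on derivatives of $w$, exploiting that the equation has constant coefficients. The argument has three phases: first upgrade $w$ to be smooth in the interior; then derive pointwise estimates on $\|D^k w\|_{L^\infty}$ in terms of $\|D^2 w\|_{L^2}$ on slightly larger balls; finally combine these with Poincar\'e and a splitting in $\rho$ versus $r$.

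For the smoothness phase, the integral identity is the weak form of the constant coefficient fourth order system $c_0^{ik,jl} w_{ikjl} = 0$, elliptic in the Legendre sense by (\ref{Bcondition}). Using Nirenberg's difference quotient method with suitable cutoff test functions one shows iteratively that $w \in H^k_{\mathrm{loc}}(B_r)$ for every $k$, and Sobolev embedding then gives $w \in C^\infty_{\mathrm{loc}}(B_r)$. Because the coefficients are constant, every derivative $\partial^\alpha w$ satisfies the same equation with the same ellipticity constants. Combining Caccioppoli's inequality on a dyadic chain of shrinking balls with the Sobolev embedding $H^\ell \hookrightarrow L^\infty$ for $\ell > n/2$, followed by a standard rescaling $y = x/r$, then yields the interior Cauchy-type bound
\begin{equation*}
\|D^k w\|_{L^\infty(B_{r/2})}^2 \leq C(n, k, \Lambda)\, r^{-n - 2(k-2)} \int_{B_r} |D^2 w|^2\, dx, \qquad k \geq 2.
\end{equation*}

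The first claimed inequality now follows by splitting on $\rho$: for $\rho \leq r/2$ one has $\int_{B_\rho} |D^2 w|^2 \leq C \rho^n \|D^2 w\|_{L^\infty(B_{r/2})}^2 \leq C_1 (\rho/r)^n \int_{B_r} |D^2 w|^2$ by the Cauchy bound with $k = 2$, while for $\rho > r/2$ the inequality holds trivially with constant $2^n$. For the second inequality I apply the same Cauchy bound with $k = 3$ not to $w$ itself but to $\tilde w(x) := w(x) - \tfrac{1}{2} \sum_{i,j} ((D^2 w)_r)_{ij}\, x_i x_j$. This $\tilde w$ still solves the equation since every quadratic polynomial lies in the kernel of the fourth order operator, and it satisfies $D^2 \tilde w = D^2 w - (D^2 w)_r$ and $D^3 \tilde w = D^3 w$. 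Hence $\|D^3 w\|_{L^\infty(B_{r/2})}^2 \leq C r^{-n-2} \int_{B_r} |D^2 w - (D^2 w)_r|^2$, and combining with the Poincar\'e inequality on $B_\rho$ for $\rho \leq r/2$ yields
\begin{equation*}
\int_{B_\rho} |D^2 w - (D^2 w)_\rho|^2 \leq C \rho^2 \int_{B_\rho} |D^3 w|^2 \leq C_2 (\rho/r)^{n+2} \int_{B_r} |D^2 w - (D^2 w)_r|^2,
\end{equation*}
while for $\rho > r/2$ the bound is again immediate.

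The main obstacle is the first phase: producing the Caccioppoli chain and higher integrability for a fourth order system under Legendre (rather than Legendre-Hadamard) ellipticity on symmetric tensors requires test functions $\eta$ whose Hessians respect the symmetric matrix structure encoded by the paired indices $ik$ and $jl$. Once smoothness and the Cauchy estimate are in hand, the scaling and splitting arguments producing the decay inequalities are routine.
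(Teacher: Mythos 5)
Your proof is correct and follows essentially the same route as the cited source \cite[Theorem 2.1]{BW1} (the present paper only quotes the result): interior smoothness for the constant-coefficient equation, scaled Cauchy-type interior derivative estimates, the splitting $\rho\le r/2$ versus $\rho>r/2$, and subtraction of the quadratic $\tfrac12\langle (D^2w)_r x,x\rangle$ together with Poincar\'e for the Campanato decay. The only remark worth adding is that the ``obstacle'' you flag at the end is not one: since Hessians of $H^2$ functions are automatically symmetric, the Legendre condition gives pointwise coercivity $c_0^{ik,jl}v_{ik}v_{jl}\ge\Lambda|D^2v|^2$ directly, so the Caccioppoli chain goes through with no extra care.
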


\medskip

\begin{corollary}
\label{Cor2} \cite[Corollary 2.2]{BW1}. \textit{Suppose }$w$\textit{ is as in Theorem \ref{five}. Then for any~}$u\in H^{2}(B_{r}),$ and\textit{ for
any~} $0<\rho\leq r,$ there holds%
\begin{equation*}
\int_{B_{\rho}}\left\vert D^{2}u\right\vert ^{2}\leq4C_{1}(\rho/r)^{n}%
\left\Vert D^{2}u\right\Vert _{L^{2}(B_{r})}^{2}+\left(  2+8C_{1}\right)
\left\Vert D^{2}(w-u)\right\Vert _{L^{2}(B_{r})}^{2}\label{twothree}%
\end{equation*}
	where $C_{1}$ depends on the ellipticity constant.

\end{corollary}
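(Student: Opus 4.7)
The plan is to prove Corollary \ref{Cor2} by a standard comparison argument: write $u = w + (u-w)$, apply Theorem \ref{five} to control the ``harmonic'' part $w$, and keep the error $u-w$ on the right hand side. The only analytic ingredients are pointwise Cauchy--Schwarz-type inequalities for $D^2$ and the fact that $(\rho/r)^n \leq 1$.

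First I would split, using $(a+b)^2 \leq 2a^2 + 2b^2$ applied componentwise to $D^2 u = D^2 w + D^2(u-w)$,
\begin{equation*}
\int_{B_\rho}|D^2 u|^2 \;\leq\; 2\int_{B_\rho} |D^2 w|^2 \;+\; 2\int_{B_\rho} |D^2(u-w)|^2.
\end{equation*}
The second integral is trivially bounded by $2\|D^2(u-w)\|_{L^2(B_r)}^2$ because $B_\rho\subset B_r$. For the first integral, Theorem \ref{five} applied to $w$ on $B_r$ gives
\begin{equation*}
\int_{B_\rho}|D^2 w|^2 \;\leq\; C_1 (\rho/r)^n \|D^2 w\|_{L^2(B_r)}^2.
\end{equation*}
Then I would replace $w$ by $u$ on the right using one more triangle-type inequality,
\begin{equation*}
\|D^2 w\|_{L^2(B_r)}^2 \;\leq\; 2\|D^2 u\|_{L^2(B_r)}^2 + 2\|D^2(w-u)\|_{L^2(B_r)}^2.
\end{equation*}

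Finally I would assemble the three estimates and use $(\rho/r)^n \leq 1$ to absorb the cross term $4C_1(\rho/r)^n \|D^2(w-u)\|_{L^2(B_r)}^2$ into the constant coefficient of $\|D^2(w-u)\|_{L^2(B_r)}^2$. This yields the claimed bound with $4C_1(\rho/r)^n$ in front of $\|D^2 u\|_{L^2(B_r)}^2$ and at most $2+4C_1$ (hence $2+8C_1$) in front of $\|D^2(w-u)\|_{L^2(B_r)}^2$.

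There is no genuine obstacle here; the proof is purely a bookkeeping exercise once Theorem \ref{five} is in hand. The value of the statement is structural rather than technical: the dominant term decays like $(\rho/r)^n$ while the comparison error is measured at the larger scale $r$, which is precisely the form needed to feed a Campanato-style iteration in the subsequent $C^{2,\alpha}$ regularity argument.
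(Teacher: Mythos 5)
Your argument is correct and is the standard comparison proof of this estimate: the paper itself only cites \cite[Corollary 2.2]{BW1} without reproving it, and the stated constants $4C_{1}$ and $2+8C_{1}$ are exactly what your two applications of $(a+b)^{2}\leq 2a^{2}+2b^{2}$ together with $(\rho/r)^{n}\leq 1$ produce (your bound $2+4C_{1}$ is in fact slightly sharper than the quoted $2+8C_{1}$). Nothing further is needed.
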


\medskip

\begin{lemma}
\label{HanLin} \cite[Lemma 3.4]{han}. Let $\phi$ be a nonnegative and
nondecreasing function on $[0,R].$ \ Suppose that
\[
\phi(\rho)\leq A\left[  \left(  \frac{\rho}{r}\right)  ^{\alpha}%
+\varepsilon\right]  \phi(r)+Br^{\beta}%
\]
for any $0<\rho\leq r\leq R,$ with $A,B,\alpha,\beta$ nonnegative constants
and $\beta<\alpha.$ \ Then for any $\gamma\in(\beta,\alpha),$ there exists a
constant $\varepsilon_{0}=\varepsilon_{0}(A,\alpha,\beta,\gamma)$ such that if
$\varepsilon<\varepsilon_{0}$ we have for all $0<\rho\leq r\leq R$%
\[
\phi(\rho)\leq c\left[  \left(  \frac{\rho}{r}\right)  ^{\gamma}%
\phi(r)+Br^{\beta}\right]
\]
where $c$ is a positive constant depending on $A,\alpha,\beta,\gamma.$ \ In
particular, we have for any $0<r\leq R$%
\[
\phi(r)\leq c\left[  \frac{\phi(R)}{R^{\gamma}}r^{\gamma}+Br^{\beta}\right]
.
\]

\end{lemma}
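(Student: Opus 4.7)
My plan is a standard geometric iteration argument. First I would choose a fixed contraction ratio $\tau \in (0,1)$ small enough that $A\tau^\alpha \leq \tfrac{1}{2}\tau^\gamma$, which is possible precisely because $\gamma < \alpha$, and then set $\varepsilon_0 := \tau^\gamma/(2A)$. Applying the hypothesis with $\rho = \tau r$ would give $\phi(\tau r) \leq A(\tau^\alpha + \varepsilon)\phi(r) + Br^\beta$, and for $\varepsilon < \varepsilon_0$ this collapses to the clean one-step inequality
$$\phi(\tau r) \leq \tau^\gamma \phi(r) + Br^\beta.$$

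Next I would iterate this recurrence. Replacing $r$ by $\tau^k r$ (still in $[0,R]$) and inducting on $k$ would produce
$$\phi(\tau^{k+1}r) \leq \tau^\gamma \phi(\tau^k r) + B\tau^{k\beta} r^\beta,$$
and unrolling the recursion gives
$$\phi(\tau^k r) \leq \tau^{k\gamma}\phi(r) + Br^\beta \sum_{j=0}^{k-1}\tau^{j\gamma + (k-1-j)\beta}.$$
Because $\gamma > \beta$, the sum is a geometric series in the ratio $\tau^{\gamma-\beta} < 1$, and reindexing gives a bound of the form $c_{\tau,\gamma,\beta}\,\tau^{k\beta}$. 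Consequently,
$$\phi(\tau^k r) \leq \tau^{k\gamma}\phi(r) + cB(\tau^k r)^\beta.$$

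Finally, to pass from the discrete scales $\tau^k r$ to an arbitrary $\rho \in (0,r]$, I would choose the integer $k \geq 0$ with $\tau^{k+1}r < \rho \leq \tau^k r$ and use the monotonicity of $\phi$ together with $\tau^k < \rho/(\tau r)$ to absorb the factor $\tau^{-\gamma}$ into the constant. This yields
$$\phi(\rho) \leq \phi(\tau^k r) \leq c\left[(\rho/r)^\gamma \phi(r) + Br^\beta\right],$$
which is the claim. I expect the only delicate point to be the coupled choice of $\tau$ and $\varepsilon_0$: one has to verify that the strict inequality $\gamma < \alpha$ is genuinely used so that $\tau^{\alpha-\gamma}$ can be driven below $1/(2A)$, and that the resulting $\varepsilon_0$ depends only on $A,\alpha,\beta,\gamma$, not on $B$ or $R$. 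Everything else reduces to summing a geometric series and a straightforward dyadic-scale comparison.
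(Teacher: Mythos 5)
Your argument is correct and is essentially the canonical proof of this iteration lemma as given in the cited reference (Han--Lin, Lemma 3.4); the paper itself offers no proof, only the citation. The one-step contraction with $A\tau^{\alpha}\leq\tfrac12\tau^{\gamma}$ and $\varepsilon_0=\tau^{\gamma}/(2A)$, the geometric-series summation using $\gamma>\beta$, and the final dyadic-scale comparison via monotonicity of $\phi$ are all exactly the standard steps, and your $\varepsilon_0$ depends only on $A,\alpha,\gamma$ as required.
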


\subsection{Regularity results}
We first show that any $ W^{2,\infty}$ weak solution of the regular equation (\ref{eq1}) is in $W^{3,2}$. The proof follows from the arguments used in \cite[Lemma 3.1]{CW} and \cite[pg 4340-4341]{BW1}. We present it here for the sake of completeness. 
\begin{proposition}
\label{prop_2} Suppose that $u\in W^{2,\infty}(B_{1})$ is a weak solution of the regular equation (\ref{eq1}) on $B_{1}$ such that $D^2u(B_1)\subset U$. Then $u\in W^{3,2}(B_{1})$ 
and satisfies the following estimate
\begin{equation}
    ||u||_{W^{3,2}(B_{1/2})}\leq C(\Lambda,||u||_{W^{2,\infty}(B_{1})}). \label{est2}
\end{equation}

\end{proposition}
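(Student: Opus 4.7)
The plan is a difference-quotient Caccioppoli argument adapted to the fourth order structure of (\ref{eq1}). Write $\Delta_h^m v(x):=(v(x+h_m)-v(x))/h$ for the difference quotient in the $m$-th direction. Shifting $x$ by $h_m$ in (\ref{eq1}), subtracting the original, and dividing by $h$ gives
\begin{equation*}
\int_{B_{1-|h|}}\frac{1}{h}\bigl[a^{ij,kl}(D^2u(x+h_m))u_{ij}(x+h_m)-a^{ij,kl}(D^2u(x))u_{ij}(x)\bigr]\eta_{kl}\,dx=0
\end{equation*}
for every $\eta\in C_0^\infty(B_{1-|h|})$. The fundamental theorem of calculus along the segment joining $D^2u(x)$ to $D^2u(x+h_m)$ collapses the bracketed integrand into $b^{ij,kl}(x)(\Delta_h^m u)_{ij}(x)$, with $b^{ij,kl}$ as defined in (\ref{Bdef}). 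Thus $w:=\Delta_h^m u\in W^{2,\infty}(B_{1-|h|})$ weakly solves the \emph{linear} fourth order equation $\int b^{ij,kl}w_{ij}\eta_{kl}\,dx=0$ whose coefficients are uniformly bounded (from the smoothness of $a^{ij,kl}$ and $D^2u\in L^\infty$) and satisfy the Legendre condition (\ref{Bcondition}).

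Next I would derive an $h$-uniform Caccioppoli estimate for $w$. Pick $\zeta\in C_0^\infty(B_{3/4})$ with $\zeta\equiv 1$ on $B_{1/2}$ and $|D^j\zeta|\le C$ for $j\le 2$. Because $w\in W^{2,\infty}$, the function $\eta=\zeta^4 w\in H^2_0(B_{3/4})$, and by density in the linear equation it is an admissible test function. Expanding
\begin{equation*}
(\zeta^4 w)_{kl}=\zeta^4 w_{kl}+4\zeta^3(\zeta_k w_l+\zeta_l w_k)+(12\zeta^2\zeta_k\zeta_l+4\zeta^3\zeta_{kl})w
\end{equation*}
and invoking (\ref{Bcondition}) on the principal term yields
\begin{equation*}
\Lambda\int\zeta^4|D^2w|^2\,dx\le -\int b^{ij,kl}w_{ij}\bigl[4\zeta^3(\zeta_k w_l+\zeta_l w_k)+(12\zeta^2\zeta_k\zeta_l+4\zeta^3\zeta_{kl})w\bigr]dx.
\end{equation*}
Each cross term on the right is estimated by Young's inequality $|ab|\le \e a^2+C_\e b^2$; since every such term carries a factor $\zeta^2$ or higher on the $|D^2w|^2$ side, one absorbs the $\e$-contribution into the left, obtaining
\begin{equation*}
\int_{B_{1/2}}|D^2 w|^2\,dx\le C(\Lambda)\int_{B_{3/4}}(|Dw|^2+|w|^2)\,dx.
\end{equation*}

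Finally, the hypothesis $u\in W^{2,\infty}(B_1)$ bounds the right-hand side by $C\|u\|_{W^{2,\infty}(B_1)}^2$ uniformly in $h$, via $|w|\le\|Du\|_{L^\infty}$ and $|Dw|\le\|D^2u\|_{L^\infty}$. Letting $h\to 0$ and invoking the standard $L^2$ difference-quotient characterization of weak derivatives gives $\partial_m D^2u\in L^2(B_{1/2})$ for each $m$, hence $u\in W^{3,2}(B_{1/2})$ with the stated bound (\ref{est2}). The central technical point is the choice of cutoff power: using $\zeta^4$ (rather than $\zeta^2$) ensures that each cross term produced when $(\zeta^4 w)_{kl}$ is expanded contains a $\zeta^2$-weighted $|D^2 w|^2$ factor that can be absorbed, leaving only quantities controlled by $\|u\|_{W^{2,\infty}}$. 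Admissibility of $\zeta^4 w$ as a test function and the passage to the limit $h\to 0$ are routine.
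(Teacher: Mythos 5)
Your proposal is correct and follows essentially the same route as the paper: form the difference-quotient equation, use the fundamental theorem of calculus to produce the coefficients $b^{ij,kl}$ of (\ref{Bdef}), test with a $\zeta^4$-weighted copy of $w=\Delta_h^m u$, absorb the cross terms via Young's inequality, and conclude from the $h$-uniform bound. The only cosmetic difference is that the paper inserts the test function $-[\tau^4 u^{h_m}]^{-h_m}$ directly into the nonlinear equation and then integrates the difference quotient by parts, whereas you first pass to the linear equation for $w$ and then test; these are the same computation in a different order.
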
 

\begin{proof}
 Let $\tau\in C_{c}^{\infty}\left(  B_{1}\right)$ be a cut off function in
$B_{1}$ that takes the value $1$ on $B_{1/2}$. Let $\eta=-[\tau^{4}u^{h_{m}}]^{-h_{m}}$ where the subscript $h_{m}$ denotes a difference quotient in the direction $e_m$. We chose $h$ small enough depending
on $\tau$ so that $\eta$ is well defined. By approximation (\ref{eq1}) holds for the above $\eta\in W_{0}^{2,\infty}(B_1)$:
\[
\int_{B_1}a^{ij,kl}(D^{2}u)u_{ij}\left[  \tau^{4}u^{h_{m}}\right]^{-h_m}
_{kl}dx=0.
\]
 Integrating by parts with respect to the difference quotient for $h$ small, we get%
\[
\int_{B_{1}}[a^{ij,kl}(D^{2}u)u_{ij}]^{h_{m}}[\tau^{4}u^{h_{m}}]_{kl}dx=0.
\]
We write the first difference quotient 
as
\begin{align}
\lbrack a^{ij,kl}(D^{2}u)u_{ij}]^{h_{m}}(x)  &  =a^{ij,kl}(D^{2}
u(x+h_m))\frac{u_{ij}(x+h_m)-u_{ij}(x)}{h}\nonumber\\
&  +\frac{1}{h}\left[  a^{ij,kl}(D^{2}u(x+h_m))-a^{ij,kl}(D^{2}%
u(x))\right]  u_{ij}(x)\nonumber\\
&  =a^{ij,kl}(D^{2}u(x+h_m))u^{h_m}_{ij}(x)\nonumber\\
&  +\left[  \int_{0}^{1}\frac{\partial a^{ij,kl}}{\partial u_{pq}}%
(tD^{2}u(x+h_m)+(1-t)D^{2}u(x))dt\right]  u^{h_m}_{pq}(x)u_{ij}(x).\nonumber
\end{align}
Denoting $v=u^{h_{m}}$ and using the notation in (\ref{Bdef}) we get
\begin{equation}
\int_{B_{1}}{b}^{ij,kl}v_{ij}[\tau^{4}v]_{kl}dx=0. \label{dq3}%
\end{equation}
Expanding derivatives we get
\begin{align}
\int_{B_{1}} b^{ij,kl}v_{ij}\tau^{4}v_{kl}dx=\int_{B_{1}}
 b^{ij,kl}v_{ik}\left(    (\tau^{4})
_{jl}v+  (\tau^{4})  _{l}v_{j}+ (\tau^{4})  _{j}%
v_{l}\right) dx.\nonumber
\end{align} 
By our assumption in (\ref{Bcondition}) $ b^{ij,kl}$ is uniformly elliptic on $U$. Therefore, we get
\begin{align*}
    \int_{B_{1}}\tau^{4}\Lambda|D^{2}v|^{2} dx\leq\int_{B_{1}}\left\vert
 b^{ij,kl}\right\vert \left\vert v_{ij}\right\vert \tau^{2}C%
(\tau,D\tau,D^{2}\tau)\left(  1+|v|+|Dv|\right)  dx\\
\leq C\sup_{B_1}  b
^{ij,kl}\int_{B_{1}}\left(  \varepsilon\tau^{4}|D^{2}v|^{2}+C\frac
{1}{\varepsilon}(1+|v|+|Dv|)^{2}\right)  dx.
\end{align*}
Choosing $\varepsilon>0$ appropriately, we get
\[
\int_{B_{1/2}}|D^{2}v|^{2}dx\leq C(||u||_{W^{2,\infty}(B_1)},\Lambda)\int_{B_{1}}(1+|v|+|Dv|)^{2}dx.
\]
Now this estimate is uniform in $h$ and direction
$e_{m,}$ so we conclude that the derivatives are in $W^{2,2}(B_{1/2}).$ This shows that%
\[
||D^{3}u||_{L^{2}(B_{1/2})}\leq C\left(  ||u||_{W^{2,\infty}(B_1)},\Lambda\right)  .
\]
\end{proof}
Observe that for the above result the Hessian was not required to have a small oscillation. Next, we prove $u$ is in $C^{2,\alpha}$ if the Hessian has a small oscillation. An explicit bound for the required modulus of oscillation is given in (\ref{omega}).

\begin{proposition} \label{hold}
\label{prop_1} Suppose that $u\in W^{2,\infty}(B_{1})$ is a weak solution of the regular equation (\ref{eq1}) on $B_{1}$ such that $D^2u(B_1)\subset U$. Let $\alpha \in (0,1)$. There exists $\omega(\Lambda,n, \alpha$, $||D^2u||_{L^{\infty}(B_1)})>0$ 
such that if $D^2u$ satisfies condition (\ref{BMO1}) with modulus $\omega$, then $D^2u\in C^{\alpha}(B_{1})$ and satisfies the following estimate
\begin{equation}
    ||D^{2}u||_{C^{\alpha}(B_{1/4})}\leq C(\Lambda,||u||_{W^{2,\infty}(B_{1})},\alpha). \label{est1}
\end{equation}
\end{proposition}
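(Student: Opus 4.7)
The plan is a Campanato freezing-of-coefficients argument on the linear equation satisfied by difference quotients of $u$, combined with the Han--Lin iteration (Lemma \ref{HanLin}) and Poincar\'e's inequality to produce Campanato decay of $D^2u$. By Proposition \ref{prop_2}, $u\in W^{3,2}(B_{1/2})$, so for each coordinate direction $e_m$ and all sufficiently small $h$, the difference quotient $v:=u^{h_m}$ is uniformly bounded in $W^{2,2}(B_{3/4})$, and the integration-by-parts used to prove Proposition \ref{prop_2} (now with an arbitrary test function in $W_0^{2,2}$ rather than $\tau^{4}u^{h_m}$) shows that
\[
\int b^{ij,kl}(D^2u(x))\,v_{ij}(x)\phi_{kl}(x)\,dx=0,\qquad \phi\in W_0^{2,2}(B_{3/4}),
\]
with $b^{ij,kl}$ as in (\ref{Bdef}). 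By condition (ii), $x\mapsto b^{ij,kl}(D^2u(x))$ is uniformly Legendre-elliptic with constant $\Lambda$; and smoothness of $b$ in $D^2u$, combined with the hypothesis $\|D^2u-(D^2u)_1\|_{L^\infty(B_1)}\le\omega$, force this coefficient to have $L^\infty$-oscillation at most $L\omega$ on $B_1$, where $L=L(\|D^2u\|_{L^\infty(B_1)})$.

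Fix $x_0\in B_{1/4}$ and $0<r<1/4$, set $c_0^{ij,kl}:=b^{ij,kl}((D^2u)_1)$ (constant and Legendre-elliptic, since $(D^2u)_1\in U$ by convexity), and use Lax--Milgram to produce $w\in W^{2,2}(B_r(x_0))$ solving $\int c_0^{ij,kl}w_{ij}\phi_{kl}=0$ with $w-v\in W_0^{2,2}(B_r(x_0))$. Subtracting the equations for $v$ and $w$ and testing with $\phi=w-v$ gives
\[
\|D^2(w-v)\|_{L^2(B_r)} \le \tfrac{1}{\Lambda}\|b-c_0\|_{L^\infty(B_r)}\,\|D^2v\|_{L^2(B_r)} \le C\omega\,\|D^2v\|_{L^2(B_r)}.
\]
Plugging this into Corollary \ref{Cor2} applied to $v$ and $w$ yields, for $0<\rho\le r<1/4$,
\[
\int_{B_\rho(x_0)}|D^2v|^2\le\bigl[4C_1(\rho/r)^n+C'\omega^2\bigr]\|D^2v\|_{L^2(B_r(x_0))}^2.
\]

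Given $\alpha\in(0,1)$, set $\gamma:=n-2+2\alpha\in(0,n)$. Apply Lemma \ref{HanLin} with $\phi(\rho):=\|D^2v\|_{L^2(B_\rho(x_0))}^2$, $\varepsilon\sim\omega^2$, and $B=0$, choosing $\omega$ small enough that $\varepsilon<\varepsilon_0(\,\cdot\,,n,0,\gamma)$; this fixes the quantitative $\omega=\omega(\Lambda,n,\alpha,\|D^2u\|_{L^\infty(B_1)})$ announced in the statement. The lemma then produces $\int_{B_\rho(x_0)}|D^2v|^2\le C\rho^{n-2+2\alpha}$ for all $\rho<1/4$, uniformly in $x_0$, $h$, and $e_m$. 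Poincar\'e's inequality upgrades this to
\[
\int_{B_\rho(x_0)}|Dv-(Dv)_{\rho,x_0}|^2\le C\rho^{n+2\alpha},
\]
and the Campanato characterization of H\"older classes yields $Dv\in C^\alpha(B_{1/4})$ with bound independent of $h$ and $e_m$. Passing to the limit $h\to0$ and varying $m$ delivers $D^2u\in C^\alpha(B_{1/4})$ together with the estimate (\ref{est1}).

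The main technical point is keeping the perturbation $\|b-c_0\|_{L^\infty(B_r)}$ small \emph{uniformly in $r$}: this is precisely where the small-oscillation hypothesis on $D^2u$, rather than mere boundedness, is essential, as it provides a single Lipschitz-controlled perturbation modulus on every subball so that one fixed frozen $c_0$ works at all scales in the Han--Lin iteration. A secondary subtlety is that Lemma \ref{HanLin} only produces a Morrey exponent $\gamma<n$; Poincar\'e's inequality is what shifts this by $2$ to a Campanato exponent $n+2\alpha$. As $\alpha\nearrow1$ the threshold $\varepsilon_0$ degenerates, forcing $\omega$ to shrink, which accounts for the $\alpha$-dependence of $\omega$ in the statement.
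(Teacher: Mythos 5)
Your proposal is correct and follows essentially the same route as the paper: difference-quotient linearization to the equation $\int b^{ij,kl}v_{ij}\phi_{kl}=0$, comparison with a frozen-coefficient solution, the energy estimate $\|D^2(w-v)\|\le C\omega\Lambda^{-1}\|D^2v\|$, Corollary \ref{Cor2}, the Han--Lin iteration with $\gamma=n-2+2\alpha$, and a Morrey/Campanato conclusion uniform in $h$ and $e_m$. The only (immaterial) differences are that the paper freezes at the average $(b^{ij,kl})_1$ rather than at $b^{ij,kl}((D^2u)_1)$, and concludes via Morrey's lemma applied to $Df$ rather than via Poincar\'e plus the Campanato characterization.
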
 

\begin{proof}

We take a single difference quotient
\[
\int_{B_{1}}[a^{ij,kl}(D^{2}u)u_{ij}]^{h_{m}}\eta_{kl}dx=0
\]
and arrive at (\ref{dq3}) as above with $u^{h_{m}}=f$
\begin{equation}
    \int_{B_{1}}b^{ij,kl}f_{ij}\eta_{kl}dx=0. \label{lala}
\end{equation}

We pick an arbitrary point $x_0$ inside $B_{1/4}$ and consider this arbitrary point
to be the center of $B_{r}.$ We denote $B_r(x_0)$ by $B_r$ for the rest of this proof. 
For a fixed $r<3/4$ let $w$ solve the following boundary value
problem
\begin{align*}
\int_{B_{r}}(b^{ij,kl})_{1}w_{ij}\eta_{kl}dx  &  =0,\forall\eta\in C_{0}^{\infty
}(B_{r})\\
w  &  =f\text{ on }\partial B_{r}\\
D w  &  =D f\text{ on }\partial B_{r}.
\end{align*}
Note that the above PDE with the given boundary
condition has a unique solution and is smooth on the interior of
$B_{r}$ (\cite[Theorem 6.33]{Folland}). Denoting $v=f-w$ we see
\[
\int_{B_{r}}(b^{ij,kl})_1v_{ij}\eta_{kl}dx=\int_{B_{r}}\left(  (b^{ij,kl})_1
-b^{ij,kl}(x)\right)  f_{ij}\eta_{kl}dx.
\]
Since $v$ can be well approximated by smooth test
functions in $H^2_0(B_r)$ we use $v$ as a test
function. We get
\[
\int_{B_{r}}(b^{ij,kl})_1v_{ij}v_{kl}dx=\int_{B_{r}}\left(  (b^{ij,kl})_1-b^{ij,kl}(x)\right)  f_{ij}v_{kl}dx.
\]
We denote the small oscillation modulus of $D^2u$ by $\omega$ to be determined soon. Since $b^{ij,kl}$ is a smooth function of $D^2u$, it is  Lipschitz in $D^2u(B_{1})\subset U$. The small oscillation modulus of $b^{ij,kl}$ is lesser or equal to $C'\omega$ where $C'=||b^{ij,kl}||_{Lip(U)}$, which in turn is bounded by $C(||D^2u||_{L^{\infty}(B_{1})})$. Using this and uniform ellipticity (\ref{Bcondition}) we get
\begin{align}
\left(  \Lambda\int_{B_{r}}\left\vert D^{2}v\right\vert ^{2}dx\right)
^{2}
\leq C'\omega^2\int_{B_r}|D^2v|^2dx\int_{B_r}|D^2f|^2dx.\nonumber
\end{align}

So now we have
\[
\int_{B_{r}}\left\vert D^{2}v\right\vert ^{2}dx\leq\frac{C'\omega^2}
{\Lambda^{2}}\int_{B_{r}}\left\vert D^{2}f\right\vert ^{2}dx.
\]

Combining Corollary \ref{Cor2} with the above we get the following for any $0<\rho\leq r$:
\begin{align}
    \int_{B_\rho}|D^2f|^2dx\leq [\frac{(2+8C_1)C'\omega^2}{\Lambda^2}+4C_1(\rho/r)^n]\int_{B_r}|D^2f|^2dx.\label{2hl}
\end{align}

Now in order to apply Lemma \ref{HanLin} we choose
\begin{align*}
\phi(\rho) &  =\int_{B_{\rho}}\left\vert D^{2}f\right\vert ^{2}dx\\
A &  =4C_{1}\\
\varepsilon &  =\frac{\left(  2+8C_{1}\right)  }{\Lambda^{2}}C'\omega^2\\
\alpha & =n\\
\beta &=0 , B=0\\
\gamma &  =n-2+2\alpha\\
R &=1/4
\end{align*}
where the
notations appearing on the left hand side of the above table refer to
constants as they are named in Lemma \ref{HanLin}. We observe that (\ref{2hl}) can be written using notation on the
left side of the above table as
\begin{equation}
\phi(\rho)\leq A\left[  \left(  \frac{\rho}{r}\right)  ^{\alpha}%
+\varepsilon\right]  \phi(r)\label{this0}%
\end{equation}
for all $0<\rho\leq r<\frac{1}{4}.$ There exists a constant $\varepsilon
^{\ast}\left(  A,\alpha,\gamma\right) =\varepsilon
^{\ast}\left(  \Lambda,n,\alpha\right) $ so that (\ref{this0}) allows us to
conclude that there is a constant $C>0$ such that
\[
\phi(\rho)\leq C \left(  \frac{\rho}{r}\right)  ^{n-2+2\alpha}\phi
(r)
\]
whenever
\begin{equation*}
\frac{\left(  2+8C_{1}\right)  }{\Lambda^{2}}C'\omega^2\leq
\varepsilon^{\ast}\left(  \Lambda,n,\alpha\right)  .\label{eps_0_def}%
\end{equation*}
 We pick one such $\omega$: 
\begin{equation}
\omega^2\leq
\frac{\varepsilon^{\ast}\left(  \Lambda,n,\alpha\right)\Lambda^2}{(2+8C_1)C'} . \label{omega}
\end{equation}
Therefore we have 
\begin{align*}
\int_{B_r}|D^2f|^2 dx\leq Cr^{n-2+2\alpha}\int_{B_{1/2}}|D^2f|^2 dx
\end{align*}
where $C$ depends on $ \Lambda,n,\alpha$. 
Since we chose an arbitrary point in $B_{1/4}$, applying Morrey's
Lemma \cite[Lemma 3, page 8]{SimonETH} to $Df$ we get
\begin{align*}
|D(u^{h_m})|_{C^{\alpha}(B_{r})}\leq C(\Lambda,  ||u||_{W^{3,2}(B_{1/2})},\alpha ),
\end{align*}
 which combined with estimate (\ref{est2}) gives the desired estimate (\ref{est1}).
\end{proof}

Note that the above estimates are appropriately scaling invariant and therefore can be used to obtain interior estimates for a solution in the interior of any sized domain.

\begin{theorem}\label{1.1}
Suppose that $u\in W^{2,\infty}(B_{1})$ is a weak solution of the regular fourth order equation (\ref{eq1}) on $B_{1}$ such that $D^2u(B_1)\subset U$. There exists a $\omega(\Lambda,n,||D^2u||_{L^{\infty}(B_1)})>0$ such that 
	if $D^2u$ satisfies condition (\ref{BMO1}),
	then $u$ is smooth in $B_1$.
 \end{theorem}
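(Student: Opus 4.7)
The strategy is to bootstrap the $C^{2,\alpha}$ regularity provided by Proposition \ref{hold} to $C^\infty$ via a Schauder-type iteration on the linearized (difference-quotient) equation. To start, fix $\alpha = 1/2$ and choose the oscillation threshold $\omega$ as in (\ref{omega}) for this $\alpha$; Proposition \ref{hold} then gives $u \in C^{2,\alpha}$ in the interior, with the estimate (\ref{est1}). In particular $D^2u$ is H\"older continuous, so by smoothness of $a^{ij,kl}$ and $b^{ij,kl}$ in their Hessian arguments, the coefficient $b^{ij,kl}(x)$ appearing in (\ref{lala}) is itself $C^\alpha$ in $x$ uniformly in $h$.

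Next, I upgrade to $C^{3,\alpha'}$ for some $\alpha'\in(0,\alpha)$ by replaying the comparison argument of Proposition \ref{hold} for the difference quotient $f = u^{h_m}$, but tracking the power-law decay $\mathrm{osc}_{B_r} b^{ij,kl} \leq Cr^{\alpha}$ in place of the fixed constant $\omega$. Comparing $f$ on $B_r$ with the solution $w$ of the frozen-coefficient problem (whose $D^2w$ satisfies the Campanato decay of Theorem \ref{five}), combined with Corollary \ref{Cor2} applied to $D^2(f-w)$, should yield an estimate of the form
\begin{equation*}
\int_{B_\rho}|D^2 f - (D^2 f)_\rho|^2\,dx \leq C\left[\left(\frac{\rho}{r}\right)^{n+2}\int_{B_r}|D^2 f - (D^2 f)_r|^2\,dx + r^{2\alpha}\int_{B_r}|D^2 f|^2\,dx\right].
\end{equation*}
Invoking Lemma \ref{HanLin} with a nontrivial $\beta$ arising from the $r^{2\alpha}$ term then gives a Campanato decay $\int_{B_\rho}|D^2 f - (D^2 f)_\rho|^2\,dx \lesssim \rho^{n+2\alpha'}$, which is equivalent to $D^2 f \in C^{\alpha'}$ with bounds uniform in $h$ and direction $e_m$. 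Sending $h\to 0$ yields $D^3 u \in C^{\alpha'}$, i.e., $u \in C^{3,\alpha'}$.

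From here the iteration is routine: if $u \in C^{k,\beta}$ with $k\geq 3$, then $a^{ij,kl}(D^2u) \in C^{k-2,\beta}$, and applying the Schauder-type estimate of the previous paragraph to a further difference quotient of the equation (or equivalently to the linearization in a $C^{k-2,\beta}$-coefficient framework) gains one derivative, producing $u \in C^{k+1,\beta'}$. Iterating yields $u \in C^\infty(B_1)$, as desired. The principal obstacle is the second step: establishing the Schauder--Campanato interior estimate for fourth-order double-divergence equations with H\"older coefficients. The technique of Proposition \ref{hold} gives a good template, but one must carefully combine the constant-coefficient $(\rho/r)^{n+2}$ decay from Theorem \ref{five} with the quantitative power-law oscillation bound on $b^{ij,kl}$, and keep every intermediate estimate independent of $h$ so that the passage to the limit in the difference quotients is legitimate at each stage of the iteration.
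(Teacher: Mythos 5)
Your first step coincides with the paper's: Propositions \ref{prop_2} and \ref{hold} give $u\in C^{2,\alpha}$ in the interior once $\omega$ is chosen as in (\ref{omega}). Where you diverge is in the passage from $C^{2,\alpha}$ to $C^{\infty}$: the paper simply invokes \cite[Theorem 1.2]{BW1}, which is exactly the statement that a $C^{2,\alpha}$ solution of a regular double-divergence equation is smooth, whereas you propose to re-derive that result by a Schauder--Campanato bootstrap on the difference-quotient equation (\ref{lala}). Your outline of that bootstrap is the standard one and is essentially what underlies the cited theorem, so the route is legitimate and has the merit of being self-contained; but as written it is a plan rather than a proof, since the key interior Campanato estimate for (\ref{lala}) with $C^{\alpha}$ coefficients --- which you yourself flag as the principal obstacle --- is only asserted (``should yield'') and never established.

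Two concrete points if you want to carry the plan out. First, the choice $\alpha=1/2$ is borderline: the perturbation term in your displayed inequality is $r^{2\alpha}\int_{B_r}|D^2f|^2\,dx\lesssim r^{2\alpha}\cdot r^{n-2+2\alpha}=r^{n-2+4\alpha}$ (using the Morrey decay already obtained in the proof of Proposition \ref{hold}), so Lemma \ref{HanLin} applied with exponents $n+2$ and $\beta=n-2+4\alpha$ yields a Campanato exponent $n-2+4\alpha$, which exceeds $n$ (hence gives H\"older continuity of $D^2f$ rather than mere BMO-type decay) only when $\alpha>1/2$. You should take $\alpha\in(1/2,1)$, which Proposition \ref{hold} permits at the cost of a smaller $\omega$. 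Second, at the higher steps of the iteration you must difference the equation again, which produces inhomogeneous terms in which derivatives of $b^{ij,kl}$ fall on lower-order quantities; these must be carried through the Campanato scheme as a genuine right-hand side (the $Br^{\beta}$ term of Lemma \ref{HanLin}) with all bounds uniform in $h$ before letting $h\to 0$. None of this is an obstruction, but it is precisely the content of \cite[Theorem 1.2]{BW1}, which the paper cites rather than reproves.
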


\begin{proof}
From the above Propositions it follows that $u\in C^{2,\alpha}(B_1).$  Then smoothness follows from \cite[Theorem 1.2]{BW1}. 
\end{proof}

\begin{remark} \label{rem}
Observe that the result in Theorem \ref{1.1}  is not restricted to equations of the form (\ref{eq1}) but it also applies to equations of the following form with smooth coefficients in the Hessian
\begin{equation}
    \int_{B_1} G^{ij}(D^2u)\eta_{ij}dx=0 \label{100}
\end{equation}
as long as uniform ellipticity of its linearization (condition (\ref{Bcondition})) is maintained. In other words, we require 
\begin{equation}
\frac{\partial G^{ij}}{\partial u_{kl}}(\xi)\sigma_{ij}\sigma_{kl}\geq
\Lambda\left\Vert \sigma\right\Vert ^{2},\text{ $\forall$ }\sigma\text{ $\in
S^{n\times n}$} \label{Bb}%
\end{equation}
for any $\xi\in U $. One can check that the above observation is true by deriving a difference quotient
expression from (\ref{100}) in the direction $h_m$ to get\begin{equation}
\int_{B_{1}} \beta^{ij,kl}u^m_{ij}\eta_{kl} dx=0 \label{main3}%
\end{equation}
where 
\begin{align*}
    \beta^{ij,kl}=\int_{0}^{1}\frac{\partial G^{ij}}{\partial u_{kl}}(D^2u(x)+t[D^2u(x+h_m)-D^2u(x)])dt. 
\end{align*}This shows that the difference quotient of the solution of (\ref{100}) satisfies an equation of the form (\ref{lala}), which was previously derived from (\ref{eq1}). Since the equation that we work with is the one satisfied by the difference quotient, the result holds good for (\ref{100}).  
\end{remark}

\section{Regularity theory: second order}

In this section, we apply the methods used in the previous section to prove analogous regularity results for second order nonlinear equations in divergence form (\ref{eq2}).

We again start by introducing the following definition.
\begin{definition}[Regular equation]
	We define equation (\ref{eq2}) to be regular on $V$ when the 
	following conditions are satisfied on $V$:
	\begin{enumerate}
	    \item[(i)] The coefficients $a^{ij}$ depend smoothly on $Du$.
	    \item[(ii)] The linearization of (\ref{eq2}) is uniformly elliptic: the leading coefficient of the linearized equation given by
	    \begin{equation}
	        b^{ij}(Du(x))=\int_{0}^1\frac{\partial}{\partial{u_{i}}} \bigg[a^{kj}(Du(x)+t [Du(x+h_m)-Du(x)])u_{k}(x)\bigg]dt \label{Bdef2}
	    \end{equation}
	     satisfies the uniform ellipticity condition for any $\xi\in V$:
	    \begin{equation}
b^{ij}(\xi)\sigma_{i}\sigma_{k}\geq\Lambda\left\Vert \sigma\right\Vert \text{  }\forall \sigma \in \mathbb R^{n}.
\label{Bcondition2} 
	\end{equation}

	\end{enumerate}
	\end{definition}
\begin{remark}
Observe that for a uniformly continuous gradient, the coefficient given by (\ref{Bdef2}) takes the following form as $h\rightarrow 0$:
\begin{equation*}
b^{ij}(Du(x))=a^{ij}(Du(x))+\frac{\partial a^{kj}}{\partial
	u_{i}}(Du(x))u_{k}(x) .
\end{equation*}

\end{remark}

\subsection{Preliminaries}
Next, we state the following preliminary results, which will be used to prove higher regularity.
We have rephrased the following results from \cite{han} with notations used in this paper.
\begin{theorem}
\label{five2}\cite[Lemma 3.10]{han}. Suppose $w\in H^{1}(B_{r})$ satisfies the uniformly elliptic constant coefficient equation%
\begin{align*}
\int c_{0}^{ij}w_{i}\eta_{j}dx &  =0\label{ccoef}\\
\forall\eta &  \in C_{0}^{\infty}(B_{r}(0)).\nonumber
\end{align*}
%
Then for any $0<\rho\leq r$ there holds

\begin{align*}
\int_{B_{\rho}}|Dw|^{2} &  \leq c(\rho/r)^{n}||Dw||_{L^{2}(B_{r}%
)}^{2}\\
\int_{B_{\rho}}|Dw-(Dw)_{\rho}|^{2} &  \leq c(\rho/r)^{n+2}%
\int_{B_{r}}|Dw-(Dw)_{r}|^{2}%
\end{align*}
where $c$ depends on the ellipticity constants and $(Dw)_{\rho}$ is the
average value\ of $Dw$ on a ball of radius $\rho$.
\end{theorem}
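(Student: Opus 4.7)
The plan is to exploit two classical facts about solutions of constant coefficient uniformly elliptic divergence form equations. First, such solutions are smooth in the interior, with scaling-invariant pointwise bounds of the form $\|D^{k}w\|_{L^{\infty}(B_{r/2})}^{2}\le C r^{-n-2k}\|Dw\|_{L^{2}(B_{r})}^{2}$. Second, the equation is invariant under addition of affine functions to $w$, so $w-L(x)$ solves the same equation for every affine $L$; in particular each partial derivative $\partial_{k}w$ also solves the equation, which is how one iterates Caccioppoli plus Sobolev embedding to obtain the pointwise derivative bounds in the first place.

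For the first inequality, I would split into two regimes. When $r/2\le\rho\le r$ the ratio $(\rho/r)^{n}$ is bounded below by $2^{-n}$, so the trivial monotonicity $\int_{B_{\rho}}|Dw|^{2}\le\int_{B_{r}}|Dw|^{2}$ is enough, with the factor $2^{n}$ absorbed into $c$. When $\rho<r/2$, the interior $L^{\infty}$ bound gives $\sup_{B_{r/2}}|Dw|^{2}\le C r^{-n}\|Dw\|_{L^{2}(B_{r})}^{2}$, and integrating over $B_{\rho}$ yields $\int_{B_{\rho}}|Dw|^{2}\le |B_{\rho}|\sup_{B_{r/2}}|Dw|^{2}\le C(\rho/r)^{n}\|Dw\|_{L^{2}(B_{r})}^{2}$.

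For the Campanato-type second inequality, the key trick is to apply the first inequality not to $w$ itself but to $\tilde w:=w-(Dw)_{r}\cdot x$, which is again a solution of the same equation and whose gradient is $Dw-(Dw)_{r}$. For $\rho\ge r/2$ one reduces again to a trivial bound using that $(Dw)_{\rho}$ is the best $L^{2}$-constant,
\[
\int_{B_{\rho}}|Dw-(Dw)_{\rho}|^{2}\le \int_{B_{\rho}}|Dw-(Dw)_{r}|^{2}\le \int_{B_{r}}|D\tilde w|^{2}.
\]
For $\rho<r/2$ one uses Poincar\'e on $B_{\rho}$ applied to $Dw$, followed by the pointwise bound on $D^{2}w=D^{2}\tilde w$ coming from the interior estimate above:
\[
\int_{B_{\rho}}|Dw-(Dw)_{\rho}|^{2}\le C\rho^{2}\int_{B_{\rho}}|D^{2}\tilde w|^{2}\le C\rho^{n+2}\sup_{B_{r/2}}|D^{2}\tilde w|^{2}\le C(\rho/r)^{n+2}\int_{B_{r}}|D\tilde w|^{2}.
\]

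The main technical obstacle is really just the interior $L^{\infty}$ bound on $D^{2}w$ in terms of $\|Dw\|_{L^{2}(B_{r})}$, which requires carefully iterating the Caccioppoli inequality on derivatives of $w$ with attention to scaling; once that ingredient is in place everything else reduces to Poincar\'e, the triangle inequality, and the translation-in-gradient trick. Since each step is dimensionally homogeneous, the resulting constants depend only on $n$ and the ellipticity of $(c_{0}^{ij})$, as claimed.
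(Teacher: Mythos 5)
The paper does not prove this result; it is quoted verbatim (up to notation) from Han--Lin \cite[Lemma 3.10]{han}, so there is no in-paper argument to compare against. Your sketch is a correct and entirely standard proof of that lemma: the first inequality follows from the interior $L^{\infty}$ gradient estimate for constant-coefficient divergence-form solutions (plus the trivial case $\rho\ge r/2$), and the second follows by subtracting the affine function $(Dw)_{r}\cdot x$, invoking Poincar\'e on $B_{\rho}$, and using the interior $L^{\infty}$ bound on $D^{2}w$, all of which is exactly how Han--Lin argue. One small slip: the general interior estimate you quote, $\|D^{k}w\|_{L^{\infty}(B_{r/2})}^{2}\le C r^{-n-2k}\|Dw\|_{L^{2}(B_{r})}^{2}$, has the wrong exponent as written; by scaling it should be $r^{-n-2(k-1)}$. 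You use the correct exponents ($r^{-n}$ for $k=1$, $r^{-n-2}$ for $k=2$) in the actual applications, so this is only a typo in the displayed schema, not a gap.
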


\medskip

\begin{corollary}
\label{Cor22} \cite[Corollary 3.11]{han}. \textit{Suppose }$w$\textit{ is as in
the Theorem \ref{five2}. Then for any~}$u\in H^{1}(B_{r}),$ and\textit{ for
any~} $0<\rho\leq r,$ there holds%
\begin{equation}
\int_{B_{\rho}}\left\vert Du\right\vert ^{2}\leq c[(\rho/r)^{n}%
\left\Vert Du\right\Vert _{L^{2}(B_{r})}^{2}+
\left\Vert D(w-u)\right\Vert _{L^{2}(B_{r})}^{2}]\label{twothree}%
\end{equation}
	where $c$ depends on the ellipticity constants.

\end{corollary}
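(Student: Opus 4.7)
The plan is to reduce to the constant-coefficient decay estimate in Theorem \ref{five2} by a simple triangle-inequality splitting, using the decomposition $Du = Dw + D(u-w)$ together with the elementary pointwise bound $|a+b|^2 \leq 2|a|^2 + 2|b|^2$.

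First, I would write
\[
\int_{B_\rho} |Du|^2 \,dx \leq 2\int_{B_\rho} |Dw|^2\,dx + 2\int_{B_\rho} |D(u-w)|^2\,dx,
\]
and apply the first decay estimate of Theorem \ref{five2} to the solution $w$ of the constant-coefficient equation, obtaining
\[
\int_{B_\rho} |Dw|^2\,dx \leq c\left(\frac{\rho}{r}\right)^n \int_{B_r} |Dw|^2\,dx.
\]
Second, I would reintroduce $u$ on the right-hand side by the same type of inequality on the full ball,
\[
\int_{B_r} |Dw|^2\,dx \leq 2\int_{B_r} |Du|^2\,dx + 2\int_{B_r} |D(u-w)|^2\,dx,
\]
and bound the difference-term on $B_\rho$ by the same quantity on $B_r$, using the monotonicity of the integral. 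Combining these three steps and absorbing all numerical factors and the constant from Theorem \ref{five2} into a single $c$ depending only on the ellipticity constants yields exactly the claimed inequality
\[
\int_{B_\rho} |Du|^2\,dx \leq c\left[\left(\frac{\rho}{r}\right)^n \|Du\|_{L^2(B_r)}^2 + \|D(w-u)\|_{L^2(B_r)}^2\right].
\]

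There is no real obstacle here: this is a standard Campanato-type perturbation estimate, the second-order analogue of Corollary \ref{Cor2}, whose only content is the bookkeeping of constants. The only thing to check is that the constant in the final bound depends only on the ellipticity parameters of the constant-coefficient operator (through the $c$ of Theorem \ref{five2}) and not on $\rho$, $r$, $u$, or $w$, which is evident from the derivation above. The usefulness of this corollary, as with its fourth-order counterpart, lies in the fact that the $\|D(w-u)\|_{L^2}$ term can later be made small by choosing $w$ to be the solution of a frozen-coefficient boundary value problem whose data matches $u$ on $\partial B_r$, enabling a Morrey-type iteration via Lemma \ref{HanLin}.
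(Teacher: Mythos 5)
Your argument is correct and is precisely the standard proof of this Campanato-type perturbation estimate: the paper itself offers no proof, citing \cite[Corollary 3.11]{han}, and the cited proof (as well as that of the fourth-order analogue, Corollary \ref{Cor2}) proceeds by exactly your splitting $Du = Dw + D(u-w)$, the decay estimate of Theorem \ref{five2} applied to $w$, and the reverse splitting on $B_r$. The bookkeeping yields a constant of the form $4c(\rho/r)^n$ on the first term and $4c+2$ on the second, both controlled by a single $c$ depending only on the ellipticity constants, as you claim.
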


\medskip

\subsection{Regularity results}
We first show that any $ W^{1,\infty}$ weak solution of the regular equation (\ref{eq2}) is in $W^{2,2}$.  
\begin{proposition}
\label{prop_22} Suppose that $u\in W^{1,\infty}(B_{1})$ is a weak solution of the regular equation (\ref{eq2}) on $B_{1}$ such that $Du(B_1)\subset V$. Then $u\in W^{2,2}(B_{1})$ 
and satisfies the following estimate
\begin{equation}
    ||u||_{W^{2,2}(B_{1/2})}\leq C(\Lambda,||u||_{W^{1,\infty}(B_{1})}). \label{est22}
\end{equation}

\end{proposition}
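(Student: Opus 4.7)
The plan is to mimic the proof of Proposition \ref{prop_2} exactly one order lower, using a cutoff plus difference quotients. Choose $\tau\in C^\infty_c(B_1)$ with $\tau\equiv 1$ on $B_{1/2}$ and $0\le\tau\le 1$, and for a direction $e_m$ and small $h$ (depending on $\tau$) set
\begin{equation*}
\eta=-[\tau^{2}u^{h_m}]^{-h_m}.
\end{equation*}
This is an admissible $W^{1,\infty}_0(B_1)$ test function by approximation, so plugging into (\ref{eq2}) and then shifting the outer difference quotient onto the coefficient-times-gradient factor gives
\begin{equation*}
\int_{B_1}[a^{ij}(Du)u_i]^{h_m}\,[\tau^2 u^{h_m}]_j\,dx=0.
\end{equation*}

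Next I would expand the integrand by adding and subtracting $a^{ij}(Du(x+h e_m))u_i(x)$ and using the fundamental theorem of calculus along the segment between $Du(x)$ and $Du(x+he_m)$, as is done for the fourth order case right after (\ref{eq1}). This reorganizes the first factor into $b^{ij}(Du(x))\,v_i(x)$ for $v:=u^{h_m}$, where $b^{ij}$ is precisely the coefficient in (\ref{Bdef2}). The identity then becomes
\begin{equation*}
\int_{B_1}b^{ij}\,v_i\,[\tau^{2}v]_{j}\,dx=0,
\end{equation*}
which, after expanding the derivative on the right, rewrites as
\begin{equation*}
\int_{B_1}\tau^{2}\,b^{ij}v_iv_j\,dx=-2\int_{B_1}\tau\,\tau_{j}\,v\,b^{ij}v_i\,dx.
\end{equation*}

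Now I would invoke the Legendre ellipticity (\ref{Bcondition2}) on the left and estimate the right-hand side by Cauchy–Schwarz followed by Young's inequality with a small parameter $\varepsilon>0$:
\begin{equation*}
\Lambda\int_{B_1}\tau^{2}|Dv|^{2}\,dx\le \varepsilon\int_{B_1}\tau^{2}|Dv|^{2}\,dx+\frac{C}{\varepsilon}\,\big(\sup_{B_1}|b^{ij}|\big)^{2}\int_{B_1}|D\tau|^{2}v^{2}\,dx.
\end{equation*}
Since $b^{ij}$ is smooth on $\overline{Du(B_1)}\subset V$ and $v=u^{h_m}$ is bounded by $\|Du\|_{L^{\infty}(B_1)}$ uniformly in $h$, absorbing the $\varepsilon$-term produces
\begin{equation*}
\int_{B_{1/2}}|Dv|^{2}\,dx\le C\bigl(\Lambda,\|u\|_{W^{1,\infty}(B_1)}\bigr)\int_{B_1}(1+v^{2})\,dx.
\end{equation*}

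Finally, since this bound is independent of $h$ and of the direction $e_m$, the standard characterization of Sobolev spaces via difference quotients (Lemma 7.24 in Gilbarg–Trudinger, say) yields that every first partial of $u$ lies in $W^{1,2}(B_{1/2})$, hence $u\in W^{2,2}(B_{1/2})$ with the desired bound (\ref{est22}). I do not expect a serious obstacle: the argument is entirely parallel to Proposition \ref{prop_2}, the only small care being the admissibility of $\eta$ and the correct product-rule expansion that produces exactly the coefficient $b^{ij}$ defined in (\ref{Bdef2}).
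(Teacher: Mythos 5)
Your proposal is correct and follows essentially the same route as the paper's own proof: the same test function $\eta=-[\tau^{2}u^{h_m}]^{-h_m}$, the same reorganization of the difference quotient into the coefficient $b^{ij}$ of (\ref{Bdef2}), and the same ellipticity-plus-Young absorption ending with the difference quotient characterization of $W^{2,2}$. No gaps.
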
 

\begin{proof}
 Let $\tau\in C_{c}^{\infty}\left(  B_{1}\right)$ be a cut off function in
$B_{1}$ that takes the value $1$ on $B_{1/2}$. Let $\eta=-[\tau^{2}u^{h_{m}}]^{-h_{m}}$ where again the subscript $h_{m}$ denotes a difference quotient in the direction $e_m$. We chose $h$ small enough depending
on $\tau$ so that $\eta$ is well defined. By approximation (\ref{eq2}) holds for the above $\eta\in W_{0}^{1,\infty}(B_1)$:
\[
\int_{B_1}a^{ij}(Du)u_{i}\left[  \tau^{2}u^{h_{m}}\right]^{-h_m}
_{j}dx=0.
\]
Integrating by parts with respect to the difference quotient for $h$ small, we get%
\[
\int_{B_{1}}[a^{ij}(Du)u_{i}]^{h_{m}}[\tau^{2}u^{h_{m}}]_{j}dx=0.
\]
We write the first difference quotient 
as
\begin{align}
\lbrack a^{ij}(Du)u_{i}]^{h_{m}}(x)  &  =a^{ij}(D
u(x+h_m))\frac{u_{i}(x+h_m)-u_{i}(x)}{h}\nonumber\\
&  +\frac{1}{h}\left[  a^{ij}(Du(x+h_m))-a^{ij}(D
u(x))\right]  u_{i}(x)\nonumber\\
&  =a^{ij}(Du(x+h_m))u^{h_m}_{i}(x)\nonumber\\
&  +\left[  \int_{0}^{1}\frac{\partial a^{ij}}{\partial u_{p}}%
(tDu(x+h_m)+(1-t)Du(x))dt\right]  u^{h_m}_{p}(x)u_{i}(x).\nonumber
\end{align}
Denoting $v=u^{h_{m}}$ and using the notation in (\ref{Bdef2}) we get
\begin{equation}
\int_{B_{1}}{b}^{ij}v_{i}[\tau^{2}v]_{j}dx=0. \label{dq32}%
\end{equation}
Expanding derivatives we get
\begin{align}
\int_{B_{1}} b^{ij}v_{i}\tau^{2}v_{j}dx=\int_{B_{1}}
 b^{ij}v_{i}  (\tau^{2})
_{j}v
 dx.\nonumber
\end{align} 
By our assumption in (\ref{Bcondition2}) $ b^{ij}$ is uniformly elliptic on $V$. Therefore, we get
\begin{align*}
\int_{B_{1}}\tau^{2}\Lambda|Dv|^{2} dx\leq \int_{B_{1}}\left\vert
 b^{ij}\right\vert \left\vert v_{i}\right\vert \tau C
(\tau,D\tau)|v|  dx\\
\leq C\sup_{B_1}  b
^{ij}\int_{B_{1}}\left(  \varepsilon\tau^{2}|Dv|^{2}+C\frac
{1}{\varepsilon}|v|^{2}\right)  dx.
\end{align*}
Choosing $\varepsilon>0$ appropriately, we get
\[
\int_{B_{1/2}}|Dv|^{2}dx\leq C(||u||_{W^{1,\infty}(B_1)},\Lambda)\int_{B_{1}}|v|^{2}dx.
\]
Now this estimate is uniform in $h$ and direction
$e_{m,}$ so we conclude that the derivatives are in $W^{1,2}(B_{1/2}).$ This shows that%
\[
||D^{2}u||_{L^{2}(B_{1/2})}\leq C\left(  ||u||_{W^{1,\infty}(B_1)},\Lambda\right)  .
\]
\end{proof}
Observe that for the above result the gradient was not required to have a small oscillation. Next, we prove $u$ is in $C^{1,\alpha}$ if the gradient has a small oscillation and an explicit bound for the required modulus of oscillation is given in (\ref{omega2}).
\begin{proposition} \label{hold}
\label{prop_1} Suppose that $u\in W^{1,\infty}(B_{1})$ is a weak solution of the regular equation (\ref{eq2}) on $B_{1}$ such that $Du(B_1)\subset V$. Let $\alpha \in (0,1)$. There exists $\omega(\Lambda,n, \alpha,||Du||_{L^{\infty}(B_1)})>0$ 
such that if $Du$ satisfies condition (\ref{BMO1}) with modulus $\omega$, then $Du\in C^{\alpha}(B_{1})$ and satisfies the following estimate
\begin{equation}
    ||Du||_{C^{\alpha}(B_{1/4})}\leq C(\Lambda,||u||_{W^{1,\infty}(B_{1/2})},\alpha). \label{est12}
\end{equation}
\end{proposition}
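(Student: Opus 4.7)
The plan is to adapt the proof of the fourth-order Hölder proposition in Section 2 to the present setting, replacing Theorem \ref{five} and Corollary \ref{Cor2} with their second-order analogues Theorem \ref{five2} and Corollary \ref{Cor22}. The architecture is the standard Campanato freezing-and-perturbation argument applied to the first-order difference quotient of $u$, combined with the iteration Lemma \ref{HanLin} and Morrey's lemma.

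First, I would take a single difference quotient of (\ref{eq2}) in direction $e_m$, arriving by the same computation as in the proof of Proposition \ref{prop_22} at
\[
\int_{B_1} b^{ij}\, f_i\, \eta_j \, dx = 0, \qquad f := u^{h_m},
\]
for all $\eta \in C_0^\infty(B_1)$, where $b^{ij}$ is the coefficient (\ref{Bdef2}). Next, I would fix an arbitrary $x_0 \in B_{1/4}$, abbreviate $B_r = B_r(x_0)$, and for each $r < 3/4$ freeze coefficients by solving the constant-coefficient Dirichlet problem
\[
\int_{B_r}(b^{ij})_1\, w_i\, \eta_j \, dx = 0, \qquad w = f \text{ on } \partial B_r,
\]
whose unique weak solution is smooth in $B_r$. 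Setting $v := f - w \in H^1_0(B_r)$ and testing against $v$ itself yields
\[
\int_{B_r}(b^{ij})_1\, v_i v_j \, dx = \int_{B_r}\bigl((b^{ij})_1 - b^{ij}(x)\bigr) f_i v_j \, dx.
\]
Because $b^{ij}$ depends smoothly on $Du$ and $Du(B_1) \subset V$, its Lipschitz constant on $V$ is bounded by $C' = C'(\|Du\|_{L^\infty(B_1)})$, so the small-oscillation hypothesis on $Du$ gives $\|b^{ij} - (b^{ij})_1\|_{L^\infty(B_1)} \le C'\omega$. Combining with the uniform ellipticity (\ref{Bcondition2}) and Cauchy-Schwarz produces
\[
\int_{B_r}|Dv|^2 \, dx \le \frac{C'\omega^2}{\Lambda^2}\int_{B_r}|Df|^2 \, dx.
\]

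Corollary \ref{Cor22} then yields the decay inequality
\[
\int_{B_\rho}|Df|^2 \, dx \le \Bigl[\tfrac{c\, C'\omega^2}{\Lambda^2} + c\,(\rho/r)^n\Bigr]\int_{B_r}|Df|^2 \, dx, \qquad 0 < \rho \le r,
\]
and I would feed this into Lemma \ref{HanLin} with $\phi(\rho) = \int_{B_\rho}|Df|^2\, dx$, $A = c$, exponent $n$, $\beta = B = 0$, target exponent $\gamma = n-2+2\alpha$, and $R = 1/4$. Choosing $\omega$ so small that $c\, C'\omega^2/\Lambda^2 \le \varepsilon^{\ast}(\Lambda,n,\alpha)$ brings the perturbation term below the Han-Lin smallness threshold and gives $\int_{B_r}|Df|^2\, dx \le C\, r^{n-2+2\alpha}\int_{B_{1/2}}|Df|^2\, dx$ uniformly in $x_0 \in B_{1/4}$. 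Morrey's lemma applied to $f = u^{h_m}$ delivers a $C^\alpha$ bound for $u^{h_m}$ uniform in $h$ and $m$, and letting $h \to 0$ together with the $W^{2,2}$ bound from Proposition \ref{prop_22} produces the desired estimate (\ref{est12}). The only real bookkeeping is tracking the explicit dependence of $\omega$ on $\Lambda$, $n$, $\alpha$, and $\|Du\|_{L^\infty(B_1)}$ through $C'$; aside from that, the argument is strictly simpler than its fourth-order counterpart, since $v \in H^1_0(B_r)$ may be used directly as a test function without the density considerations required in $H^2_0$.
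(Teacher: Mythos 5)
Your proposal is correct and follows essentially the same route as the paper's own proof: difference quotient, coefficient freezing on $B_r(x_0)$, testing the difference $v=f-w$ against itself, the decay estimate via Corollary \ref{Cor22}, the Han--Lin iteration lemma with the same parameter choices, and Morrey's lemma. The only differences are notational (your $C'$ is the paper's $C''$).
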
 

\begin{proof}

We take a single difference quotient
\[
\int_{B_{1}}[a^{ij}(Du)u_{i}]^{h_{m}}\eta_{j}dx=0
\]
and arrive at (\ref{dq32}) as before with $u^{h_{m}}=f$
\begin{equation}
    \int_{B_{1}}b^{ij}f_{i}\eta_{j}dx=0. \label{lala2}
\end{equation}

We pick an arbitrary point $x_0$ inside $B_{1/4}$ and consider this arbitrary point
to be the center of $B_{r}(x_0).$
For a fixed $r<3/4$ we let $w$ solve the following boundary value
problem
\begin{align*}
\int_{B_{r}}(b^{ij})_{1}w_{i}\eta_jdx  &  =0,\forall\eta\in C_{0}^{\infty
}(B_{r})\\
w  &  =f\text{ on }\partial B_{r}.
\end{align*}
Note that the above PDE with the given boundary
condition has a unique solution and is smooth on the interior of
$B_{r}$ (\cite{Folland}). Let $v=f-w.$ Note that%
\[
\int_{B_{r}}(b^{ij})_1v_{i}\eta_{j}dx=\int_{B_{r}}[ (b^{ij})_1
-b^{ij}(x)] f_{i}\eta_{j}dx.
\]
Since $v$ can be well approximated by smooth test
functions in $H^1_0(B_r)$ we can use $v$ as a test
function. We get
\[
\int_{B_{r}}(b^{ij})_1v_{i}v_{j}dx=\int_{B_{r}}[ (b^{ij})_1-b^{ij}(x)]  f_{i}v_{j}dx.
\]
We denote the small oscillation modulus of $Du$ by $\omega$ to be determined soon. Since $b^{ij}$ is a smooth function of $Du$, it is  Lipschitz in $Du(B_{1})\subset V$. The small oscillation modulus of $b^{ij}$ is lesser or equal to $C''\omega$ where $C''=||b^{ij}||_{Lip(V)}$, which in turn is bounded by $C(||Du||_{L^{\infty}(B_{1})})$. Using this and uniform ellipticity (\ref{Bcondition2}) we get
\begin{align}
\left(  \Lambda\int_{B_{r}}\left\vert Dv\right\vert ^{2}dx\right)
^{2}
\leq C''\omega^2\int_{B_r}|Dv|^2dx\int_{B_r}|Df|^2dx.\nonumber
\end{align}

So now we have
\[
\int_{B_{r}}\left\vert Dv\right\vert ^{2}dx\leq\frac{C''\omega^2}
{\Lambda^{2}}\int_{B_{r}}\left\vert Df\right\vert ^{2}dx.
\]

Combining Corollary \ref{Cor22} with the above we get the following for any $0<\rho\leq r$:
\begin{align}
    \int_{B_\rho}|Df|^2dx\leq c[\frac{C''\omega^2}{\Lambda^2}+(\rho/r)^n]\int_{B_r}|Df|^2dx.\label{2hl2}
\end{align}

Now in order to apply Lemma \ref{HanLin} we choose
\begin{align*}
\phi(\rho) &  =\int_{B_{\rho}}\left\vert Df\right\vert ^{2}dx\\
A &  =c\\
\varepsilon &  =\frac{  c C''\omega^2 }{\Lambda^{2}}\\
\alpha & =n\\
\beta &=0, B=0\\
\gamma &  =n-2+2\alpha\\
R &=1/4
\end{align*}
where the
notations appearing on the left hand side of the above table refer to
constants as they are named in Lemma \ref{HanLin}. We observe that (\ref{2hl2}) can be written using notation on the
left side of the above table as
\begin{equation}
\phi(\rho)\leq A\left[  \left(  \frac{\rho}{r}\right)  ^{\alpha}%
+\varepsilon\right]  \phi(r)\label{this}%
\end{equation}
for all $0<\rho\leq r<\frac{1}{4}.$ There exists a constant $\varepsilon
^{\ast}\left(  A,\alpha,\gamma\right) =\varepsilon
^{\ast}\left(  \Lambda,n,\alpha\right) $ so that (\ref{this}) allows us to
conclude that there is a constant $C>0$ such that
\[
\phi(\rho)\leq C \left(  \frac{\rho}{r}\right)  ^{n-2+2\alpha}\phi
(r)
\]
whenever
\begin{equation*}
\frac{  c C''\omega^2 }{\Lambda^{2}}\leq
\varepsilon^{\ast}\left(  \Lambda,n,\alpha\right)  .\label{eps_0_def}%
\end{equation*}
 We pick one such $\omega$: 
\begin{equation}
\omega^2\leq
\frac{\varepsilon^{\ast}\left(  \Lambda,n,\alpha\right)\Lambda^2}{cC''} . \label{omega2}
\end{equation}
Therefore we have 
\begin{align*}
\int_{B_r}|Df|^2 dx\leq Cr^{n-2+2\alpha}\int_{B_{1/2}}|Df|^2 dx.
\end{align*}
where $C$ depends on $ \Lambda,n,\alpha$. 
Since we chose an arbitrary point in $B_{1/4}$, applying Morrey's
Lemma \cite[Lemma 3, page 8]{SimonETH} to $f$ we get
\begin{align*}
|u^{h_m}|_{C^{\alpha}(B_{r})}\leq C(\Lambda,  ||u||_{W^{2,2}(B_{1/2})},\alpha ),
\end{align*}
 which combined with estimate (\ref{est22}) gives the desired estimate (\ref{est12}).
\end{proof}

\begin{theorem}\label{1.2}
Suppose that $u\in W^{1,\infty}(B_{1})$ is a weak solution of the regular second order equation (\ref{eq2}) on $B_{1}$ such that $Du(B_1)\subset V$. There exists a $\omega(\Lambda,n,||Du||_{L^{\infty}(B_1)})>0$ such that 
	if $Du$ satisfies condition (\ref{BMO1}),
	then $u$ is smooth in $B_1$.
 \end{theorem}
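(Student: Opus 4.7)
The plan is to combine Propositions \ref{prop_22} and \ref{prop_1} with a classical linear Schauder bootstrap. Fix any $\alpha\in(0,1)$ and choose $\omega$ as in (\ref{omega2}) so that the hypothesis of Proposition \ref{prop_1} is satisfied; then $Du\in C^\alpha(B_{1/4})$ with the estimate (\ref{est12}). Because the derivation is translation and scale invariant, applying the same argument on shifted and rescaled balls together with a covering argument yields $u\in C^{1,\alpha}_{\mathrm{loc}}(B_1)$.

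With $Du\in C^\alpha_{\mathrm{loc}}$, the coefficient $b^{ij}(Du(\cdot))$ of the linearized equation (\ref{lala2}) is a smooth function composed with a $C^\alpha$ map, hence lies in $C^\alpha_{\mathrm{loc}}(B_1)$. Uniform ellipticity is provided by (\ref{Bcondition2}), and for every admissible direction $e_m$ the difference quotient $f=u^{h_m}$ solves (\ref{lala2}). Classical Schauder estimates for linear divergence-form elliptic equations with H\"older coefficients then yield interior bounds on $f$ in $C^{1,\alpha}$ that are uniform in $h$. Passing $h\to 0$ gives $\partial_m u\in C^{1,\alpha}_{\mathrm{loc}}$ for every $m$, i.e., $u\in C^{2,\alpha}_{\mathrm{loc}}(B_1)$.

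From here I iterate. Suppose $u\in C^{k,\alpha}_{\mathrm{loc}}(B_1)$ for some $k\geq 2$. Then $b^{ij}(Du)\in C^{k-1,\alpha}_{\mathrm{loc}}$, and the difference quotient $u^{h_m}$ satisfies (\ref{lala2}) with coefficients of this improved regularity. Another application of Schauder theory improves $u$ to $C^{k+1,\alpha}_{\mathrm{loc}}$. Induction on $k$ delivers $u\in C^\infty(B_1)$.

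The principal technical step is the first improvement from $C^{1,\alpha}$ to $C^{2,\alpha}$: it requires that the linearized divergence-form equation satisfied by the difference quotient has H\"older-continuous, uniformly elliptic coefficients, so that the classical Schauder theory applies. Both properties are immediate from the regular-equation hypotheses on (\ref{eq2}) together with Proposition \ref{prop_1}; once $C^{2,\alpha}$ is in hand, all higher-order improvements follow mechanically by induction. No analogue of the appeal to \cite[Theorem 1.2]{BW1} used in Theorem \ref{1.1} is needed, since the second-order bootstrap is entirely standard.
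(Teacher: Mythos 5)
Your proposal is correct and follows essentially the same route as the paper: obtain $Du\in C^{\alpha}$ from Propositions \ref{prop_22} and \ref{prop_1}, observe that the coefficients of the linearized divergence-form equation (\ref{lala2}) are then H\"older continuous and uniformly elliptic, apply interior Schauder theory for divergence-form equations (the paper cites \cite[Theorem 3.13]{han}) to the difference quotients to reach $C^{2,\alpha}$, and bootstrap. Your write-up merely makes explicit the covering argument and the passage $h\to 0$ that the paper leaves implicit.
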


\begin{proof}
From the above Propositions it follows that $u\in C^{1,\alpha}(B_1).$ This leads to H\"older continuous coefficients of (\ref{lala2}). Applying \cite[Theorem 3.13]{han} to the divergence form equation (\ref{lala2}), we get $u\in C^{2,\alpha}(B_1).$ Following a standard bootstrapping procedure, we get $u$ is smooth in $B_1$.
\end{proof}

\begin{remark} \label{rem2}
Again similar to remark (\ref{rem}) observe that the result in Theorem \ref{1.2}  is not restricted to equations of the form (\ref{eq2}) but it also applies to equations of the following form with smooth coefficients in the gradient
\begin{equation*}
    \int_{B_1} G^{i}(Du)\eta_{i}dx=0 \label{1}
\end{equation*}
as long as uniform ellipticity of its linearization (condition (\ref{Bcondition2})) is maintained:
\begin{equation}
\frac{\partial G^{i}}{\partial u_{k}}(\xi)\sigma_{i}\sigma_{k}\geq
\Lambda\left\Vert \sigma\right\Vert ^{2},\text{ $\forall$ }\sigma\text{ $\in\mathbb R^{n}$} \label{Bb2}%
\end{equation}
where $\xi\in V $. 
\end{remark}

As a direct application of Theorem \ref{1.2}, we see that for a weak $C^{0,1}(B_{1})$ solution $u$ of the minimal surface equation (\ref{ms}) there exists $\omega(n,||Du||_{L^{\infty}(B_1)})>0$ such that if the gradient has a small oscillation bounded by $\omega$,  then $u$ is smooth in $B_1$. 

\section{Proofs of the main results}

\noindent
\textit{Proof of Theorem \ref{main1}.} 
Since $u$ is a critical point of (\ref{Ffunc}) and we are restricted to compactly supported variations, from the Euler Lagrange form we see that $u$ is a weak solution of the following equation:
\begin{equation*}
    \int_{B_1}\frac{\partial F }{\partial{u_{ij}}}(D^2u)\eta_{ij}dx=0. \label{EL}
\end{equation*}
Since $F$ is uniformly convex we observe that
\[\frac{\partial}{\partial u_{kl}}\left(
	\frac{\partial F(D^{2}u)}{\partial u_{ij}}\right)  \sigma_{ij}\sigma_{kl}\geq
	\Lambda\left\vert \sigma\right\vert ^{2}.\]
Therefore $\frac{\partial F}{\partial u_{ij}}(D^2u)$ satisfies condition (\ref{Bb}). Hence in light of remark \ref{rem}, by Theorem \ref{1.1}  we have $u$ is smooth inside $B_1$.
\hfill$\square$
\bigskip
    
\noindent
\textit{Proof of Theorem \ref{main2}.}
Since $u$ is a critical point of (\ref{2nd}) and we are restricted to compactly supported variations, from the Euler Lagrange form we see that $u$ is a weak solution of the following equation:
\begin{equation*}
    \int_{B_1}\frac{\partial F }{\partial{u_{i}}}(Du)\eta_{i}dx=0. \label{EL}
\end{equation*}
Since $F$ is uniformly convex we observe that
\[\frac{\partial}{\partial u_{j}}\left(
	\frac{\partial F(Du)}{\partial u_{i}}\right)  \sigma_{i}\sigma_{j}\geq
	\Lambda\left\vert \sigma\right\vert ^{2}.\]
Therefore $\frac{\partial F}{\partial u_{i}}(Du)$ satisfies condition (\ref{Bb2}). Hence in light of remark (\ref{rem2}), by Theorem \ref{1.2}  we have $u$ is smooth inside $B_1$.
\hfill$\square$

\begin{remark}
As a quick application of Theorem \ref{main1}, we see that for a $C^{1,1}(B_{1})$ weak solution $u$ of the Hamiltonian stationary equation in double divergence form (\ref{hstat}),
there exists $\omega(n,||D^2u||_{L^{\infty}(B_1)})>0$ such that if the Hessian has a small oscillation bounded by $\omega$, then $u$ is smooth in $B_1$. In \cite[Theorem 1.1]{CW}, it was shown that a $C^{1,1}$ weak solution of (\ref{hstat}) is smooth if the $C^{1,1}$ norm of the solution is bounded by a small dimensional constant. The arguments used in \cite{CW} relied on proving an equivalence of (\ref{hstat}) with its geometric form where the fourth order operator factors into two nonlinear second order operators. However, it is important to note that such a factorization for any general fourth order double divergence equation is not always possible.

This naturally leads to the open question of whether the small oscillation requirement for the Hessian can be dropped: or in the case of the Hamiltonian stationary equation, can small oscillation of the Hessian be achieved by exploiting the geometric properties of the equation. In two dimensions, a $C^{1,1}$ solution alone is sufficient to prove smoothness, as shown in \cite{BW2}.

\end{remark}

\begin{remark}
Theorem \ref{1.1}  holds good if $b^{ij,kl}$ is replaced by $-b^{ij,kl}$ in condition (\ref{Bcondition}) and therefore our main result in Theorem \ref{main1} holds good if $F$ is uniformly concave. Similarly the second order results hold good if $b^{ij}$ is replaced by $-b^{ij}$ in condition (\ref{Bcondition2}) and $F$ is uniformly concave in Theorem \ref{main2}.
\end{remark}


\textbf{Acknowledgments.} The author is grateful to Y. Yuan for insightful discussions. The author thanks R. Shankar and M. Warren for helpful comments.

\bibliographystyle{amsalpha}
\bibliography{bootreg}

\end{document}